\documentclass[a4paper,11pt]{amsart}

\usepackage{enumerate}

\usepackage{upref}

\usepackage[colorlinks,backref]{hyperref}                   

\usepackage{amssymb,amsmath}

\newtheorem{theorem}{Theorem}[section]
\newtheorem{lemma}[theorem]{Lemma}

\newtheorem{definition}{Definition}[section]

\theoremstyle{remark} 
\theoremstyle{definition} 

\numberwithin{equation}{section}

\newif\ifcomment \commentfalse \def\commentON{\commenttrue}
 \long\outer\def\BC#1\EC{\ifcomment \sloppy \par
\# \ldots\dotfill {\em #1} \dotfill \# \par \fi } \commentON

\newcommand{\remove}[1]{}

\newcommand{\R}{{{\mathbb{R}}}}

{%

\begin{enumerate}}%
{\end{enumerate}
}

\begin{document}

\title[SHARP BLOW-UP CONDITION FOR A  CROSS ATTRACTION SYSTEM]{ SHARP BLOW-UP CONDITION FOR A TWO-SPECIES CROSS ATTRACTION SYSTEM IN 3 OR MORE DIMENSIONS}

\author[{S. Ulusoy}]{Suleyman Ulusoy}
\address[ Suleyman Ulusoy]{\newline
	Department of Mathematics and Natural Sciences\newline
        American University of Ras Al Khaimah\newline
	PO Box 10021, Ras Al Khaimah, UAE}
\email{suleyman.ulusoy@aurak.ac.ae}
\urladdr{https://aurak.ac.ae/en/dr-suleyman-ulusoy/\newline
$https://www.researchgate.net/profile/Suleyman_{-}Ulusoy$}

\subjclass[2000]{Primary  35K65;  35B45, 35J20}

\keywords{degenerate parabolic equation, energy functional, gradient flow, free-energy solutions, blow-up, global existence}


\date{\today}

\begin{abstract}
We consider a degenerate chemotaxis model with two-species and
two-stimuli in dimension $d \geq 3$. Under the hypothesis of integrable initial data with finite second moment and energy, we show local-in-time existence for any mass of free-energy solutions, namely weak solutions with some free energy estimates. We exhibit that the qualitative behavior of solutions is decided by a set of  critical values: there is a critical value of a parameter pair  in the system of  equations for which there is a global-in-time energy solution and   there exist blowing-up free-energy solutions under a criticality condition is violated for the  parameter pair.

\end{abstract}

\maketitle


\section{Introduction}\label{sec:intro}

Inspired by \cite{carrillo2022} (see also \cite{espejo2019, knutsdottir2014}), for modeling the interaction and motion of two cell populations in breast cancer cell invasion models in $\mathbb{R}^d$, $d \ge 3$, we propose the following chemotaxis kind of system with two chemicals and a nonlinear diffusion term:
\begin{equation}\label{1.1}
\begin{cases}
\partial_t u = \alpha_1 \left( \frac{d - 2}{d} \right) \|u\|_{\frac{2d}{d+2}}^{\frac{4}{d+2}} \Delta\left(u^{\frac{2d}{d+2}}\right) - \nabla \cdot (u\nabla v), & x \in \mathbb{R}^d, \; t > 0, \\[2mm]
-\Delta v = w, & x \in \mathbb{R}^d, \; t > 0, \\[2mm]
\partial_t w = \alpha_2 \left( \frac{d - 2}{d} \right) \|w\|_{\frac{2d}{d+2}}^{\frac{4}{d+2}} \Delta\left(w^{\frac{2d}{d+2}}\right) - \nabla \cdot (w\nabla z), & x \in \mathbb{R}^d, \; t > 0, \\[2mm]
-\Delta z = u, & x \in \mathbb{R}^d, \; t > 0,
\end{cases}
\end{equation}
with initial data satisfying
\begin{equation}\label{1.2}
u(x, 0) = u_0(x); \quad w(x, 0) = w_0(x), \quad x \in \mathbb{R}^d.
\end{equation}
In \eqref{1.1}, $u(x, t)$ and $w(x, t)$ denote the macrophages and the tumor cells; $v(x, t)$ and $z(x, t)$ denote the concentration of the chemicals produced by $w(x, t)$ and $u(x, t)$, respectively, and $\alpha_1, \alpha_2 \in (0, 1]$. For simplicity, the initial data are assumed to satisfy
\begin{equation}\label{1.3}
u_0 \in L^1\left(\mathbb{R}^d; (1 + |x|^2)dx\right) \cap L^\infty(\mathbb{R}^d), \quad \nabla u_0^{\frac{2d}{d+2}} \in L^2(\mathbb{R}^d), \quad u_0 \ge 0,
\end{equation}
\begin{equation}\label{1.4}
w_0 \in L^1\left(\mathbb{R}^d; (1 + |x|^2)dx\right) \cap L^\infty(\mathbb{R}^d), \quad \nabla w_0^{\frac{2d}{d+2}} \in L^2(\mathbb{R}^d), \quad w_0 \ge 0.
\end{equation}

We note that $v(x, t)$ and $z(x, t)$ are given by
\begin{equation}\label{1.5}
v(x, t) = K * w = c_d \int_{\mathbb{R}^d} \frac{w(y, t)}{|x - y|^{d-2}} \, dy,
\end{equation}
\begin{equation}\label{1.6}
z(x, t) = K * u = c_d \int_{\mathbb{R}^d} \frac{u(y, t)}{|x - y|^{d-2}} \, dy,
\end{equation}
with $K(x) = \frac{c_d}{|x|^{d-2}}$, where $c_d$ is the inverse surface area of the sphere $S^{d-1}$ in $\mathbb{R}^d$. We can rewrite the system \eqref{1.7} as follows:
\begin{equation}\label{1.7}
\begin{cases}
\partial_t u = \alpha_1 \left( \frac{d - 2}{d} \right) \|u\|_{\frac{2d}{d+2}}^{\frac{4}{d+2}} \Delta\left(u^{\frac{2d}{d+2}}\right) - \nabla \cdot (u\nabla K * w), \\[2mm]
\partial_t w = \alpha_2 \left( \frac{d - 2}{d} \right) \|w\|_{\frac{2d}{d+2}}^{\frac{4}{d+2}} \Delta\left(w^{\frac{2d}{d+2}}\right) - \nabla \cdot (w\nabla K * u),
\end{cases}
\end{equation}
with initial data \eqref{1.2}. Note that mass is conserved for \eqref{1.1} (or \eqref{1.7}):
\begin{equation}\label{1.8}
M_1 := \int_{\mathbb{R}^d} u(x, t) \, dx = \int_{\mathbb{R}^d} u_0(x) \, dx; \quad M_2 := \int_{\mathbb{R}^d} w(x, t) \, dx = \int_{\mathbb{R}^d} w_0(x) \, dx.
\end{equation}
There is a free energy associated with equation \eqref{1.1}:
\begin{equation}\label{1.9}
E_\alpha[u(t), w(t)] := \alpha_1 \|u\|_{\frac{2d}{d+2}}^2 + \alpha_2 \|w\|_{\frac{2d}{d+2}}^2 - c_d H[u, w],
\end{equation}
where
\begin{equation}\label{1.10}
H[u, w] := \int_{\mathbb{R}^d} \int_{\mathbb{R}^d} \frac{u(x, t)w(y, t)}{|x - y|^{d-2}} \, dx \, dy.
\end{equation}
The energy functional can be rewritten as
\begin{equation}\label{1.11}
E_\alpha[u(t), w(t)] := \left(\sqrt{\alpha_1}\|u\|_{\frac{2d}{d+2}} - \sqrt{\alpha_2}\|w\|_{\frac{2d}{d+2}}\right)^2 + \left(2\sqrt{\alpha_1\alpha_2}\|u\|_{\frac{2d}{d+2}}\|w\|_{\frac{2d}{d+2}} - c_d H[u, w]\right).
\end{equation}
By the Hardy-Littlewood-Sobolev (HLS) inequality, the second term is positive for all $u, w$ in case
\begin{equation}\label{1.12}
\frac{2\sqrt{\alpha_1\alpha_2}}{c_d} \ge C_{HLS},
\end{equation}
the precise value of which was determined by Lieb \cite{lieb1983} for the exponents considered here. Suppose however, that \eqref{1.12} is violated. Let
\[ \delta := C_{HLS} - \frac{2\sqrt{\alpha_1\alpha_2}}{c_d}. \]
Then if we choose $u$ and $w$ to be HLS optimizers such that $\sqrt{\alpha_1}\|u\|_{\frac{2d}{d+2}} = \sqrt{\alpha_2}\|w\|_{\frac{2d}{d+2}}$, we find $E_\alpha[u(t), w(t)] = -\delta \|u\|_{\frac{2d}{d+2}} \|w\|_{\frac{2d}{d+2}}$. Thus, \eqref{1.12} is the necessary and sufficient condition for non-negativity of $E_\alpha[u(t), w(t)]$.

\subsection{Energy Criticality and Scaling Analysis}

To rigorously justify the nonstandard choice of the diffusion exponent $m = \frac{2d}{d+2}$ in the free energy functional \eqref{1.9}, we present a comprehensive scaling analysis. This choice corresponds to the conformal energy critical regime where the stabilizing effect of nonlinear diffusion and the destabilizing effect of non-local cross-attraction scale identically under mass-preserving dilations.

Consider the mass-preserving rescaling of the population densities defined by:
\begin{equation}
u_\lambda(x) = \lambda^d u(\lambda x), \quad w_\lambda(x) = \lambda^d w(\lambda x), \quad \text{for } \lambda > 0.
\end{equation}
This dilation preserves the total mass of both species, i.e., $\|u_\lambda\|_{L^1} = \|u\|_{L^1} = M_1$ and $\|w_\lambda\|_{L^1} = \|w\|_{L^1} = M_2$. We examine how each component of the energy functional behaves under this transformation:

For an arbitrary power $m$, the $L^m$-norm of the dilated density scales according to:
\begin{equation}
\|u_\lambda\|_{L^m} = \left( \int_{\mathbb{R}^d} \left( \lambda^d u(\lambda x) \right)^m dx \right)^{\frac{1}{m}} = \lambda^{d - \frac{d}{m}} \|u\|_{L^m}.
\end{equation}
Consequently, the structural diffusion energy terms present in $E_\alpha$ scale as:
\begin{equation}
\|u_\lambda\|_{L^m}^2 = \lambda^{2d\left(1 - \frac{1}{m}\right)} \|u\|_{L^m}^2.
\end{equation}
Substituting our chosen exponent $m = \frac{2d}{d+2}$, the scaling power simplifies elegantly to:
\begin{equation}
2d\left(1 - \frac{d+2}{2d}\right) = 2d\left(\frac{2d - d - 2}{2d}\right) = d - 2.
\end{equation}
Thus, the diffusion energy behaves as:
\begin{equation}
\|u_\lambda\|_{L^{\frac{2d}{d+2}}}^2 = \lambda^{d-2} \|u\|_{L^{\frac{2d}{d+2}}}^2.
\end{equation}

Next, the non-local interaction functional $H[u, w]$ under dilation becomes:
\begin{equation}
H[u_\lambda, w_\lambda] = \int_{\mathbb{R}^d} \int_{\mathbb{R}^d} \frac{\lambda^d u(\lambda x) \lambda^d w(\lambda y)}{|x - y|^{d-2}} \,dx\,dy.
\end{equation}
Performing the change of variables $x' = \lambda x$ and $y' = \lambda y$, where the differential volume elements transform as $dx = \lambda^{-d} dx'$ and $dy = \lambda^{-d} dy'$, we find:
\begin{equation}
H[u_\lambda, w_\lambda] = \lambda^{d-2} \int_{\mathbb{R}^d} \int_{\mathbb{R}^d} \frac{u(x')w(y')}{|x' - y'|^{d-2}} \,dx'\,dy' = \lambda^{d-2} H[u, w].
\end{equation}

Combining these results, the total free energy evaluated at the rescaled state satisfies:
\begin{equation}
E_\alpha[u_\lambda, w_\lambda] = \lambda^{d-2} E_\alpha[u, w].
\end{equation}
Because both the attractive cross-interaction and the repulsive degenerate diffusion share the identical scaling factor $\lambda^{d-2}$, neither mechanism can dominate the other purely via localized structural compression ($\lambda \to \infty$) or spatial dispersion ($\lambda \to 0$). Instead, the qualitative behavior of the system is entirely dictated by the sharp balance condition established over the parameters $\alpha_1, \alpha_2$ via the Hardy-Littlewood-Sobolev constant. This explains why these dimension-dependent critical values appear naturally.

The connection between the free energy \eqref{1.9} and the equation \eqref{1.1} is that the latter can be written as a gradient flow with respect to the Wasserstein metric, yielding formal free energy dissipation:
\begin{equation}\label{1.13}
\begin{aligned}
\frac{d}{dt}E_\alpha[u(t), w(t)] = &- \int_{\mathbb{R}^d} u \left| \nabla \left( \frac{\alpha_1}{2} \|u\|_{\frac{2d}{d+2}}^{\frac{4}{d+2}} u^{\frac{d-2}{d+2}} \right) - \nabla v \right|^2 \, dx \\
&- \int_{\mathbb{R}^d} w \left| \nabla \left( \frac{\alpha_2}{2} \|w\|_{\frac{2d}{d+2}}^{\frac{4}{d+2}} w^{\frac{d-2}{d+2}} \right) - \nabla z \right|^2 \, dx \leq 0.
\end{aligned}
\end{equation}

We now define the weak and free energy solutions for \eqref{1.1}--\eqref{1.2}.

\begin{definition}\label{def1.1}
Let $d \ge 3$ and $T > 0$. Suppose $(u_0, w_0)$ satisfies \eqref{1.3}--\eqref{1.4}. Then $(u, w)$ of nonnegative functions defined in $\mathbb{R}^d \times (0, T)$ is called a weak solution if:
\begin{itemize}
\item[(i)] $(u, w) \in \left( C([0, T); L^1(\mathbb{R}^d)) \cap L^\infty(\mathbb{R}^d \times (0, T)) \right)^2$ and $\left(u^{\frac{2d}{d+2}}, w^{\frac{2d}{d+2}}\right) \in \left( L^2(0, T; H^1(\mathbb{R}^d)) \right)^2$.
\item[(ii)] $(u, w)$ satisfies
\begin{equation}\label{1.14}
\begin{split}
&\int_0^T \int_{\mathbb{R}^d} u \phi_{1t} \, dx \, dt + \int_{\mathbb{R}^d} u_0(x) \phi_1(x, 0) \, dx \\
&= \int_0^T \int_{\mathbb{R}^d} \left( \frac{\alpha_1}{2} \|u\|_{\frac{2d}{d+2}}^{\frac{4}{d+2}} \nabla\left(u^{\frac{2d}{d+2}}\right) - u\nabla v \right) \cdot \nabla \phi_1 \, dx \, dt, \\
&\int_0^T \int_{\mathbb{R}^d} w \phi_{2t} \, dx \, dt + \int_{\mathbb{R}^d} w_0(x) \phi_2(x, 0) \, dx \\
&= \int_0^T \int_{\mathbb{R}^d} \left( \frac{\alpha_2}{2} \|w\|_{\frac{2d}{d+2}}^{\frac{4}{d+2}} \nabla\left(w^{\frac{2d}{d+2}}\right) - w\nabla z \right) \cdot \nabla \phi_2 \, dx \, dt,
\end{split}
\end{equation}
for any test functions $\phi_i \in \mathcal{D}(\mathbb{R}^d \times [0, T))$ with $v = K * w$ and $z = K * u$.
\end{itemize}
\end{definition}

\begin{definition}\label{def1.2}

Let $T > 0$. Then $(u, w)$ is called a free energy solution with initial data $(u_0, w_0)$ on $(0, T)$ if $(u, w)$ is a weak solution and moreover satisfies
\begin{equation}\label{1.15}
\left( u^{\frac{3d-2}{2(d+2)}}, w^{\frac{3d-2}{2(d+2)}} \right) \in \left( L^2(0, T; H^1(\mathbb{R}^d)) \right)^2,
\end{equation}
and
\begin{equation}\label{1.16}
\begin{aligned}
E_\alpha[u(t), w(t)] &+ \int_0^t \int_{\mathbb{R}^d} u \left| \nabla \left( \frac{\alpha_1}{2} \|u\|_{\frac{2d}{d+2}}^{\frac{4}{d+2}} u^{\frac{d-2}{d+2}} \right) - \nabla v \right|^2 \, dx \, ds \\
&+ \int_0^t \int_{\mathbb{R}^d} w \left| \nabla \left( \frac{\alpha_2}{2} \|w\|_{\frac{2d}{d+2}}^{\frac{4}{d+2}} w^{\frac{d-2}{d+2}} \right) - \nabla z \right|^2 \, dx \, ds \le E_\alpha[u_0, w_0],
\end{aligned}
\end{equation}
for all $t \in (0, T)$ with $v = K * w$ and $z = K * u$.
\end{definition}

\begin{theorem}\label{thm1.1}

Suppose that the initial data $(u_0, w_0)$ satisfies \eqref{1.3} and \eqref{1.4}.
\begin{itemize}
\item[(i)] If $\alpha_1, \alpha_2$ satisfy \eqref{1.12}, then there exists a global free energy solution.
\item[(ii)] If $\alpha_1, \alpha_2$ do not satisfy \eqref{1.12}, then it is possible to construct large initial data ensuring blow up in finite time.
\end{itemize}
\end{theorem}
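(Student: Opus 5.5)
Part (i) will be proved by a regularization and compactness argument driven by the free energy. For $\varepsilon>0$ I consider the system with the degenerate diffusion $\Delta(u^{\frac{2d}{d+2}})$ replaced by $\Delta(u^{\frac{2d}{d+2}})+\varepsilon\Delta u$. I also mollify the Newtonian kernel, $K_\varepsilon = K*\rho_\varepsilon$, and mollify the data. For each fixed $\varepsilon$ the regularized system is uniformly parabolic with smooth nonlocal drift, and it has a global smooth solution by a standard fixed-point argument. This solution conserves the masses \eqref{1.8} and satisfies the energy identity \eqref{1.13} with an extra nonnegative $\varepsilon$-dissipation term.

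The key a priori estimate uses the decomposition \eqref{1.11}. Under \eqref{1.12}, the HLS inequality gives $c_dH[u,w]\le c_dC_{HLS}\|u\|_{\frac{2d}{d+2}}\|w\|_{\frac{2d}{d+2}}\le 2\sqrt{\alpha_1\alpha_2}\|u\|\,\|w\|$, so both brackets in \eqref{1.11} are nonnegative. This alone only gives $E_\alpha\ge 0$, which does not bound the norms in the critical case of equality in \eqref{1.12}. The main obstacle is therefore to obtain uniform-in-$\varepsilon$ bounds on $\|u\|_{L^\infty}$, $\|w\|_{L^\infty}$ and on $\nabla u^{\frac{2d}{d+2}}$ in $L^2$.

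In the subcritical case, strict inequality in \eqref{1.12} gives $E_\alpha\ge c(\|u\|^2+\|w\|^2)$ with $c>0$. The energy inequality then bounds the $L^{\frac{2d}{d+2}}$ norms uniformly in time. I would run a Moser/Alikakos iteration on $\int u^p$, $p\to\infty$. At each step the attraction term $\int u^{p}w$ is controlled by the diffusion term through Gagliardo--Nirenberg, using that $u,w$ are bounded in $L^1\cap L^{\frac{2d}{d+2}}$. The diffusion coefficient $\|u\|^{\frac{4}{d+2}}_{\frac{2d}{d+2}}$ must also stay bounded below; I expect to get this from mass conservation and the second-moment bound, since $\|u\|_{L^1}$ is controlled by $\|u\|_{\frac{2d}{d+2}}$ and the second moment.

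In the equality case I would first try to control concentration. A sequence along which $\|u\|_{\frac{2d}{d+2}}$ blows up while $E_\alpha$ stays bounded must asymptotically saturate HLS. By Lieb's classification of optimizers, the rescaled densities would then converge to the optimizer profile. This conflicts with the finite mass and second moment. If this argument proves too delicate, I will restrict to the strict case and handle equality by approximating $\alpha_i$ from above.

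With the uniform bounds in hand, I pass to the limit $\varepsilon\to0$. Aubin--Lions applied to $u^{\frac{2d}{d+2}}$, together with the $L^2H^1$ bound coming from the dissipation and the tightness given by the second moment, yields strong $L^1$ compactness. This is enough to pass to the limit in the weak formulation \eqref{1.14}. The bound \eqref{1.15} follows by expanding the dissipation, and \eqref{1.16} follows by lower semicontinuity.

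For part (ii) I would use the virial (second-moment) identity. Set $I(t)=\int|x|^2(u+w)\,dx$ and test the weak formulation with $|x|^2$. The diffusion terms give $2(d-2)\frac{\alpha_i}{2}\cdot$ (a multiple of) $\|u\|^2_{\frac{2d}{d+2}}$; this uses that $\|u\|^{\frac{4}{d+2}}_{\frac{2d}{d+2}}\int u^{\frac{2d}{d+2}}=\|u\|^2_{\frac{2d}{d+2}}$ and the critical exponent. For the cross terms, symmetrizing in $x,y$ gives $-2\int\int \frac{x\cdot(x-y)}{|x-y|^d}u(x)w(y)$ plus the symmetric term, which sums to $-(d-2)c_dH[u,w]$ with the right constants. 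The outcome is $\frac{d}{dt}I(t)=C_d\,E_\alpha[u(t),w(t)]$ for a positive dimensional constant $C_d$; the factor $d-2$ appears precisely because of the scaling computation above. By \eqref{1.16}, $I(t)\le I(0)+C_dE_\alpha[u_0,w_0]\,t$.

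When \eqref{1.12} fails, I take as data suitably truncated and smoothed HLS optimizers with $\sqrt{\alpha_1}\|u_0\|=\sqrt{\alpha_2}\|w_0\|$. As computed after \eqref{1.12}, these satisfy $E_\alpha[u_0,w_0]<0$. I then scale up the amplitude (large data), which keeps the energy negative, with the size of the data controlled by the scaling computation. The inequality above forces $I(t)$ to become negative in finite time, which is a contradiction. Hence the free energy solution cannot exist beyond $T^*\le I(0)/(C_d|E_\alpha[u_0,w_0]|)$, meaning $\|u\|_\infty$ or $\|w\|_\infty$ blows up. Two points need care here: the truncation must keep the energy negative (by continuity of $E_\alpha$), and the rigorous virial computation requires justifying $|x|^2$ as a test function, which I would do by cutoff using the moment bounds.
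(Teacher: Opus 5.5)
Your plan follows the paper's route: regularize, control the critical norms by HLS and the energy, keep the diffusion coefficient away from zero using mass and second moment, iterate to $L^\infty$, and pass to the limit by lower semicontinuity. For (ii) you use, as the paper does, the virial identity $I'(t)=2(d-2)E_\alpha[u(t),w(t)]$ with negative-energy data. Part (ii) is fine. Your truncation remark is in fact necessary, since the optimizers $(1+|x|^2)^{-\frac{d+2}{2}}$ have infinite second moment.

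The first genuine gap is the equality case of \eqref{1.12}, which (i) explicitly includes. You rightly note that $E_\alpha\ge 0$ gives nothing there, but neither of your remedies works. The concentration argument produces no contradiction. $E_\alpha$ is $2$-homogeneous and invariant under the $L^{\frac{2d}{d+2}}$-preserving dilations $h\mapsto\lambda^{-\frac{d+2}{2}}h(\cdot/\lambda)$, and these multiply the mass by $\lambda^{\frac{d-2}{2}}$. Hence an asymptotically HLS-optimal bubble can carry an arbitrary fraction of the mass. Its second moment about the concentration point tends to zero, and the rest of the mass may stay spread out. There is no mass quantization forcing everything into the bubble, which is what drives the single-species critical-mass argument. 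So finite mass, and a finite (even nondecreasing, since $I'=2(d-2)E_\alpha\ge 0$) second moment, are not contradicted. Approximating $\alpha_i$ from above also fails: every bound you have degenerates as $2\sqrt{\alpha_1\alpha_2}-c_dC_{HLS}\downarrow 0$ (cf.\ $C^*$ in Case A of Theorem~\ref{thm2.6}), so no uniform control of $\|u\|_{\frac{2d}{d+2}}$ survives the limit. The paper's Case B does not fill this either. The bound $\bigl|\sqrt{\alpha_1}\|u\|_{\frac{2d}{d+2}}-\sqrt{\alpha_2}\|w\|_{\frac{2d}{d+2}}\bigr|\le\sqrt{E_\alpha[u_0,w_0]}$ and \eqref{2.37} only say that the two norms track each other, not that they stay bounded.

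The second gap affects even the strict case: the Moser/Alikakos iteration cannot be started from bounds in $L^1\cap L^{\frac{2d}{d+2}}$, because $L^{\frac{2d}{d+2}}$ is exactly the scale-critical space. Put $m=\frac{2d}{d+2}$ and consider the scalings $u\mapsto\lambda^d u(\lambda\cdot)$ and $u\mapsto au$ (same for $w$). Under these, $\int u^p w$ and the dissipation $\mathcal{D}_p=\|u\|_m^{\frac{4}{d+2}}\|\nabla u^{\frac{p+m-1}{2}}\|_2^2$ (and its analogue for $w$) both scale like $\lambda^{dp}a^{p+1}$. Meanwhile $\|\cdot\|_1$ scales like $a$ and $\|\cdot\|_m$ like $\lambda^{\frac{d-2}{2}}a$. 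Matching exponents shows that any bound of $\int u^p w$ by $C\,\mathcal{D}_p^{\Theta}$ times nonnegative powers of these norms must have $\Theta\ge 1$ once $p>\frac{d-2}{d+2}$. Young's inequality then cannot absorb the attraction term. With $\Theta=1$ the estimate closes only if $\alpha_i$ dominates $p$-dependent Gagliardo--Nirenberg constants, which \eqref{1.12} does not provide. The lower bound $c_{\min}$ does not help, since the same factor $\|u\|_m^{\frac{4}{d+2}}$ sits on both sides. What is missing is a non-concentration ingredient. One option is uniform-in-time equi-integrability of $u^m$ and $w^m$, after which splitting $u=\min(u,k)+(u-k)_+$ gives the needed smallness. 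Another is an a priori bound in some $L^q$ with $q>m$. The paper's Lemma~\ref{lem2.2} only sketches this step, so it cannot be taken from there.
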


\section{Approximate System}

For the existence of solutions, we consider a regularized system:
\begin{equation}\label{2.1}
\begin{cases}
\partial_t u_\epsilon = \alpha_1 \left( \frac{d - 2}{d} \right) \|u_\epsilon\|_{\frac{2d}{d+2}}^{\frac{4}{d+2}} \Delta\left((u_\epsilon + \epsilon)^{\frac{2d}{d+2}}\right) - \nabla \cdot (u_\epsilon\nabla v_\epsilon), \\[2mm]
v_\epsilon = K * w_\epsilon, \\[2mm]
\partial_t w_\epsilon = \alpha_2 \left( \frac{d - 2}{d} \right) \|w_\epsilon\|_{\frac{2d}{d+2}}^{\frac{4}{d+2}} \Delta\left((w_\epsilon + \epsilon)^{\frac{2d}{d+2}}\right) - \nabla \cdot (w_\epsilon\nabla z_\epsilon), \\[2mm]
z_\epsilon = K * u_\epsilon,
\end{cases}
\end{equation}
with regularized initial data $u_\epsilon(x, 0) = u_0^\epsilon(x) \ge 0$, $w_\epsilon(x, 0) = w_0^\epsilon(x) \ge 0$ via smooth mollifiers preserving total mass. 

By following the general procedure in the single population chemotaxis systems  \cite{sugiyama2006,  carlen2021, ulusoy2017} one can  obtain the following lemma for which no proof is provided here.

\begin{lemma}\label{lem2.1}
There exists $T_\epsilon^{\max} \in (0, \infty]$ such that the regularized system \eqref{2.1} has a unique nonnegative solution $(u_\epsilon, w_\epsilon) \in (W_p^{2,1}(Q_T))^2$. Moreover, if $T_\epsilon^{\max} < \infty$, then $\lim_{t \to T_\epsilon^{\max}} (\|u_\epsilon(\cdot, t)\|_\infty + \|w_\epsilon(\cdot, t)\|_\infty) = \infty$.
\end{lemma}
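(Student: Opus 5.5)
The plan is the standard contraction-plus-continuation scheme for regularized Keller--Segel systems. The $\epsilon$-shift makes the diffusion uniformly parabolic, and the nonlocal prefactor can be frozen and treated as a coefficient depending only on time.

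\textbf{Step 1: reformulation.} Write the equation for $u_\epsilon$ in nondivergence-compatible form,
\[
\partial_t u_\epsilon = a_1(t)\,\nabla\cdot\bigl(m(u_\epsilon+\epsilon)^{m-1}\nabla u_\epsilon\bigr) - \nabla u_\epsilon\cdot\nabla v_\epsilon + u_\epsilon w_\epsilon ,
\qquad m=\tfrac{2d}{d+2},
\]
where $a_1(t)=\alpha_1\frac{d-2}{d}\|u_\epsilon(t)\|_{m}^{4/(d+2)}$ and we used $-\Delta v_\epsilon=w_\epsilon$. The equation for $w_\epsilon$ is analogous. The mollified data lie in $L^1\cap L^\infty$ and are smooth, so $\|u_0^\epsilon\|_m>0$. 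On a short interval the prefactor $a_1(t)$ therefore stays between two positive constants.

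\textbf{Step 2: local existence by contraction.} For $T$ small, define a closed set $X_T$ of pairs $(\bar u,\bar w)$ satisfying:
\begin{itemize}
\item $\bar u,\bar w\in C([0,T];L^1\cap L^\infty)$;
\item $\|\bar u\|_\infty\le 2\|u_0^\epsilon\|_\infty+1$ and likewise for $\bar w$;
\item $\|\bar u(t)\|_m\ge \tfrac12\|u_0^\epsilon\|_m$.
\end{itemize}
Given $(\bar u,\bar w)\in X_T$, freeze the following quantities:
\begin{itemize}
\item the coefficient $(\bar u+\epsilon)^{m-1}$, which is bounded between $\epsilon^{m-1}$ and a constant;
\item $a_1(t)$, computed from $\bar u$;
\item the drift $\nabla K*\bar w$, which is bounded by the standard estimate $\|\nabla K*f\|_\infty\le C\|f\|_1^{1/d}\|f\|_\infty^{1-1/d}$;
\item the source $\bar w$.
\end{itemize}
The resulting problem is a linear uniformly parabolic equation with bounded coefficients. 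Ladyzhenskaya--Solonnikov--Uraltseva theory then gives a unique solution in $W^{2,1}_p(Q_T)$, after first obtaining Hölder continuity of the coefficients by De Giorgi--Nash so that Schauder/$L^p$ theory applies.

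Nonnegativity follows from the maximum principle, since the zero-order term $u\bar w\ge0$. Mass conservation follows by integration. For $T$ small, the $L^\infty$ bound and the lower bound on $\|\cdot\|_m$ are preserved. Lipschitz estimates of the solution map in $C([0,T];L^1\cap L^2)$ use:
\begin{itemize}
\item the Lipschitz dependence of $(s+\epsilon)^{m-1}$ on bounded sets;
\item the estimate $\bigl|\|f\|_m^{4/(d+2)}-\|g\|_m^{4/(d+2)}\bigr|\le C\|f-g\|_m$, valid since $\|\cdot\|_m$ is bounded below;
\item HLS / Young-type bounds for $\nabla K*(\bar w_1-\bar w_2)$.
\end{itemize}
Together these give a contraction for small $T$. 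A fixed point is a solution, and it inherits the $W^{2,1}_p$ regularity. Uniqueness follows from the same difference estimate via Gronwall.

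\textbf{Step 3: continuation.} The existence time $T$ depends only on:
\begin{itemize}
\item $\|u_0^\epsilon\|_\infty+\|w_0^\epsilon\|_\infty$;
\item the conserved masses;
\item a lower bound for $\|u(t)\|_m$ and $\|w(t)\|_m$.
\end{itemize}
By Hölder, $M_1=\|u\|_1\le \|u\|_m^{\theta}\|u\|_\infty^{1-\theta}$, so the $L^m$ norm is bounded below whenever the $L^\infty$ norm is bounded above. Hence if the sup norms stayed bounded on $[0,T^{\max}_\epsilon)$ with $T^{\max}_\epsilon<\infty$, we could restart the construction from times arbitrarily close to $T^{\max}_\epsilon$ with a uniform step length. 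This contradicts maximality and yields the stated blow-up alternative.

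The main obstacle is Step 2's control of the nonlocal-in-space prefactor $\|u\|_m^{4/(d+2)}$ together with the quasilinear coefficient. The key is to keep these away from degeneracy (using $\epsilon>0$ and the lower bound on the $L^m$ norm) and Lipschitz in the iteration norm. I would try the contraction in $L^\infty(0,T;L^2)\cap L^2(0,T;H^1)$ energy norms first, and upgrade regularity afterwards.
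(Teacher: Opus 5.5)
The paper gives no proof of this lemma; it only points to the standard single-species procedure of the cited works. Your overall plan (uniformly parabolic regularization, local fixed point, continuation on sup norms) is that procedure, but two steps fail as written.

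\textbf{Continuation.} The inequality $M_1=\|u\|_1\le\|u\|_m^{\theta}\|u\|_\infty^{1-\theta}$ is false. Since $m=\frac{2d}{d+2}>1$, $L^1$ does not lie between $L^m$ and $L^\infty$, and H\"older only gives the reverse bound $\|u\|_m\le\|u\|_1^{1/m}\|u\|_\infty^{1-1/m}$. Concretely, the mass-preserving spreading $u^\lambda(x)=\lambda^{-d}u(x/\lambda)$ keeps $\|u^\lambda\|_1=\|u\|_1$, while $\|u^\lambda\|_\infty=\lambda^{-d}\|u\|_\infty\to0$ and $\|u^\lambda\|_m=\lambda^{-(d-2)/2}\|u\|_m\to0$ as $\lambda\to\infty$. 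So an upper bound on $\|u\|_\infty$ gives no lower bound on $\|u\|_m$.

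Your step length degenerates as $\|u\|_m\to0$: the ellipticity constant $a_1(t)\,m\,\epsilon^{m-1}$ vanishes, and $s\mapsto s^{4/(d+2)}$ loses its Lipschitz bound. The restart argument is therefore unjustified as written. What actually prevents the degeneration is confinement, exactly as in Lemma~\ref{lem:lower_bound}:
\begin{itemize}
\item the initial second moment is finite by \eqref{1.3}--\eqref{1.4};
\item on a bounded interval where $\|u_\epsilon\|_\infty+\|w_\epsilon\|_\infty$ stays bounded, $I'(t)\le C(1+I(t))$, using $\int|x|^2\Delta\bigl((u+\epsilon)^m\bigr)\,dx=2d\int\bigl((u+\epsilon)^m-\epsilon^m\bigr)\,dx$ and the bound on $\|\nabla K*w\|_\infty$;
\item hence $\int_{B_R}u_\epsilon\ge M_1/2$ with $R^2=2\sup I/M_1$, and H\"older on $B_R$ gives the lower bound you need.
\end{itemize}

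\textbf{Fixed point.} You freeze the quasilinear coefficient $(\bar u+\epsilon)^{m-1}$ at an element of $X_T$, which is merely bounded and measurable. This leaves a divergence-form equation with $L^\infty$ coefficients, which has no $W^{2,1}_p$ solution in general; De Giorgi--Nash makes the solution H\"older, not the frozen coefficient.

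More importantly, your Lipschitz estimate does not close. The difference $\delta=u_1-u_2$ is forced by $\nabla\cdot\bigl(m[a^{(1)}(\bar u_1+\epsilon)^{m-1}-a^{(2)}(\bar u_2+\epsilon)^{m-1}]\nabla u_2\bigr)$, where $a^{(i)}$ is the prefactor computed from $\bar u_i$. Testing with $\delta$ leaves $\int|\bar u_1-\bar u_2|\,|\nabla u_2|\,|\nabla\delta|$. This is controlled by an $L^2$- or energy-type distance between $\bar u_1$ and $\bar u_2$ only if $\nabla u_2$ is bounded uniformly over $X_T$. Measurable-coefficient theory gives only $L^{2+\eta}$ integrability of $\nabla u_2$, so the same obstruction persists in the $L^\infty_tL^2_x\cap L^2_tH^1_x$ norm you suggest at the end.

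There are two standard ways around this:
\begin{itemize}
\item Run the contraction on a ball of $W^{2,1}_p(Q_T)$, $p>d+2$, in nondivergence form, consistent with the space in the statement. For equal initial data, $\|\bar u_1-\bar u_2\|_{L^\infty(Q_T)}+\|\nabla(\bar u_1-\bar u_2)\|_{L^\infty(Q_T)}\le CT^{\gamma}\|\bar u_1-\bar u_2\|_{W^{2,1}_p(Q_T)}$ supplies the small factor.
\item Freeze only the nonlocal quantities $a_1[\bar u](t)$ and $\nabla K*\bar w$, and solve the local quasilinear problem by Ladyzhenskaya--Solonnikov--Uraltseva theory, which provides the gradient bounds.
\end{itemize}
Your Gronwall uniqueness argument is fine once both solutions lie in $W^{2,1}_p$ with $p>d+2$.
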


We also recall here a version of the Hardy-Littlewood-Sobolev inequality(HLS) that if
\begin{equation*}
 \frac{1}{p} + \frac{1}{q}  = 1+ \frac{\lambda}{d}
\end{equation*}
and $h_1 \in L^p(\R^d)$,  $h_2 \in L^q(\R^d)$ with $p, q > 1$ then there exists a constant $C_{HLS} = C_{HLS}(d, \lambda, p) > 0$ such that
\begin{equation}\label{2.3}
\left| \int_{\mathbb{R}^d} \int_{\mathbb{R}^d} \frac{h_1(x)h_2(y)}{|x - y|^{d-\lambda}} \, dy \, dx \right| \le C_{HLS} \|h_1\|_p \|h_2\|_q.
\end{equation}

To ensure that the estimates do not degenerate over $\mathbb{R}^d$, we establish a quantified uniform lower bound for the critical $L^{\frac{2d}{d+2}}$ norm before evaluating major variables.

\begin{lemma}[Quantified Lower Bound]\label{lem:lower_bound}
Let $(u_\epsilon, w_\epsilon)$ be a smooth, non-negative solution to the regularized system \eqref{2.1} on $[0, T]$ satisfying mass conservation $\int_{\mathbb{R}^d} u_\epsilon(x,t)\,dx = M_1$ and a bounded second moment $I(t) \le I_0$. Then there exists a constant $c_{\min} = c_{\min}(M_1, I_0, d) > 0$ independent of $\epsilon$ such that
\begin{equation}
\|u_\epsilon(\cdot, t)\|_{\frac{2d}{d+2}} \ge c_{\min} > 0, \quad \forall t \in [0, T].
\end{equation}
\end{lemma}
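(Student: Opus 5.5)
The plan is to prove the bound by a purely static argument. Mass and second moment together force a fixed fraction of the mass into a fixed ball, and Hölder's inequality on that ball then bounds the $L^{\frac{2d}{d+2}}$ norm from below. Nothing about the dynamics of \eqref{2.1} is used beyond mass conservation and the hypothesis $I(t)\le I_0$, where
\[
I(t)=\int_{\mathbb{R}^d}|x|^2u_\epsilon(x,t)\,dx .
\]
So the constant will depend only on $M_1$, $I_0$ and $d$, and is automatically independent of $\epsilon$ and $t$.

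First, localize the mass. Fix $R>0$, to be chosen below. By Chebyshev's inequality applied to the second moment,
\[
\int_{|x|>R}u_\epsilon(x,t)\,dx\le \frac{1}{R^2}\int_{\mathbb{R}^d}|x|^2u_\epsilon(x,t)\,dx\le \frac{I_0}{R^2}.
\]
Choose $R:=\sqrt{2I_0/M_1}$; if $I_0=0$, take any $R>0$. With this choice,
\[
\int_{B_R}u_\epsilon(x,t)\,dx\ge \frac{M_1}{2}\qquad\text{for all } t\in[0,T].
\]

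Second, apply Hölder's inequality on $B_R$ with exponents $p=\frac{2d}{d+2}$ and $p'=\frac{2d}{d-2}$:
\[
\frac{M_1}{2}\le\int_{B_R}u_\epsilon\,dx\le |B_R|^{\frac{d-2}{2d}}\,\|u_\epsilon(\cdot,t)\|_{\frac{2d}{d+2}} .
\]
This gives
\[
\|u_\epsilon(\cdot,t)\|_{\frac{2d}{d+2}}\ge c_{\min}:=\frac{M_1}{2}\,\bigl(\omega_d R^d\bigr)^{-\frac{d-2}{2d}},
\]
where $\omega_d$ is the volume of the unit ball. Substituting $R$ yields the explicit constant
\[
c_{\min}=\tfrac12\,\omega_d^{-\frac{d-2}{2d}}\,M_1^{\frac{d+2}{4}}\,(2I_0)^{-\frac{d-2}{4}} .
\]

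There is no genuine obstacle in this argument. The only point needing care is that the hypothesis $I(t)\le I_0$ is uniform in $\epsilon$. If one instead wants to derive that bound from the regularized flow, via $\frac{d}{dt}I$ computed with the equation and controlled through the HLS inequality \eqref{2.3} together with the energy, that derivation is the real difficulty. However, the lemma takes the bound as an assumption, so it is not needed here.

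The same argument applies verbatim to $w_\epsilon$ with $M_2$ and its own second-moment bound.
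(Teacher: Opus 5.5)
Your proof is correct and matches the paper's argument: Chebyshev with $R=\sqrt{2I_0/M_1}$ puts half the mass in $B_R$, then H\"older on $B_R$ with exponents $\frac{2d}{d+2}$ and $\frac{2d}{d-2}$ gives the bound. Your simplified constant $\tfrac12\,\omega_d^{-\frac{d-2}{2d}}M_1^{\frac{d+2}{4}}(2I_0)^{-\frac{d-2}{4}}$ agrees with the paper's $c_{\min}$.
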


\begin{proof}
Let $R > 0$ be a radial cutoff parameter. Splitting the total conserved mass inside and outside a ball $B_R(0)$, we have:
\begin{align*}
M_1 = \int_{\mathbb{R}^d} u_\epsilon(x,t)\,dx &= \int_{B_R(0)} u_\epsilon(x,t)\,dx + \int_{\mathbb{R}^d \setminus B_R(0)} u_\epsilon(x,t)\,dx \\
&\le \int_{B_R(0)} u_\epsilon(x,t)\,dx + \frac{1}{R^2} \int_{\mathbb{R}^d \setminus B_R(0)} |x|^2 u_\epsilon(x,t)\,dx \\
&\le \int_{B_R(0)} u_\epsilon(x,t)\,dx + \frac{I_0}{R^2}.
\end{align*}
Choosing $R = \sqrt{\frac{2I_0}{M_1}}$ yields $\int_{B_R(0)} u_\epsilon(x,t)\,dx \ge \frac{M_1}{2}$. Applying H{\"o}lder's inequality on $B_R(0)$ with exponents $p = \frac{2d}{d+2}$ and $q = \frac{2d}{d-2}$, we obtain:
\[ \frac{M_1}{2} \le \int_{B_R(0)} u_\epsilon(x,t)\,dx \le \|u_\epsilon\|_{L^{\frac{2d}{d+2}}(B_R(0))} |B_R(0)|^{\frac{d-2}{2d}} . \]
Since $|B_R(0)| = \omega_d R^d$, we isolate the norm:
\begin{equation}
\|u_\epsilon(\cdot, t)\|_{\frac{2d}{d+2}} \ge \frac{M_1}{2} \left( \omega_d \left(\frac{2I_0}{M_1}\right)^{d/2} \right)^{-\frac{d-2}{2d}} =: c_{\min} > 0.
\end{equation}
The same argument holds for $w_\epsilon(t)$ with mass $M_2$.
\end{proof}

\begin{lemma}\label{lem2.2}
Let $T \in (0, T_\epsilon^{\max}]$. Suppose that there exists a constant $C > 0$ such that the solution satisfies $\|u_\epsilon(t)\|_{\frac{2d}{d+2}} \le C$ and $\|w_\epsilon(t)\|_{\frac{2d}{d+2}} \le C$ for $t \in (0, T)$. Then there exists a constant $C = C(d, u_0, w_0) > 0$ such that
\begin{equation}\label{2.4}
\|(u_\epsilon(t), w_\epsilon(t))\|_r \le C, \quad \|(v_\epsilon(t), z_\epsilon(t))\|_r + \|(\nabla v_\epsilon(t), \nabla z_\epsilon(t))\|_r \le C,
\end{equation}
for $r \in [1, \infty)$ and $t \in (0, T)$.
\end{lemma}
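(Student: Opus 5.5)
We will prove Lemma~\ref{lem2.2} by an $L^r$ energy estimate for each component, followed by a Moser/Alikakos-type iteration. The diffusion coefficient is $D_u(t):=\alpha_1\frac{d-2}{d}\|u_\epsilon\|_{\frac{2d}{d+2}}^{\frac{4}{d+2}}$. By Lemma~\ref{lem:lower_bound}, together with the second moment bound that the regularized flow is assumed to satisfy on $(0,T)$, it is bounded below by $\alpha_1\frac{d-2}{d}c_{\min}^{4/(d+2)}>0$. By hypothesis it is also bounded above. So the diffusion is uniformly nondegenerate in its prefactor, and the problem reduces to a porous-medium equation with exponent $m=\frac{2d}{d+2}$ and drift $\nabla v_\epsilon=\nabla K*w_\epsilon$.

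\textbf{Step 1 ($L^r$ estimate for fixed $r$).} Test the $u_\epsilon$ equation with $r u_\epsilon^{r-1}$. The diffusion produces
\[
-D_u\, r(r-1)m\int (u_\epsilon+\epsilon)^{m-1}u_\epsilon^{r-2}|\nabla u_\epsilon|^2
\le -c_r D_u\int|\nabla u_\epsilon^{\frac{r+m-1}{2}}|^2 .
\]
The drift term is rewritten as
\[
(r-1)\int u_\epsilon^r(-\Delta v_\epsilon)=(r-1)\int u_\epsilon^r w_\epsilon .
\]
The same computation applies to $w_\epsilon$ and gives $(r-1)\int w_\epsilon^r u_\epsilon$. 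Young's inequality bounds both cross terms by $\int u_\epsilon^{r+1}+\int w_\epsilon^{r+1}$.

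\textbf{Step 2 (absorbing $\int u_\epsilon^{r+1}$).} We control $\int u_\epsilon^{r+1}$ by a Gagliardo--Nirenberg interpolation between $\|u_\epsilon\|_{\frac{2d}{d+2}}\le C$ and $\|\nabla u_\epsilon^{(r+m-1)/2}\|_2$. The key point is the exponent. Because $m=\frac{2d}{d+2}=2-\frac{4}{d+2}$, this is exactly the critical case $m=2-\frac{2}{d}$ rescaled to the $L^{\frac{2d}{d+2}}$ level. Interpolating from the $L^1$ norm would give a power of the gradient equal to $2$, which would demand a smallness condition. Interpolating from the \emph{uniformly bounded} $L^{\frac{2d}{d+2}}$ norm, which is strictly better than $L^1$, should make the power of the gradient term strictly less than $2$.

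I expect this exponent bookkeeping to be the main obstacle. The plan is as follows. With $\theta$ the GN interpolation parameter for $f=u_\epsilon^{(r+m-1)/2}$ between $L^{p_0}$, where $p_0=\frac{2d}{(d+2)}\cdot\frac{2}{r+m-1}$, and $\dot H^1$, verify that the resulting exponent $\frac{2(r+1)}{r+m-1}\theta<2$ for all $r>1$.

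If instead the exponent turns out to be exactly critical, fall back on the splitting $\int u^{r+1}\le k^{r}\!\int_{\{u\le k\}}u+\int (u-k)_+^{r+1}$. Choose $k$ large so that $\|(u-k)_+\|_{\frac{2d}{d+2}}$ is small. This is uniform because $u_\epsilon$ has bounded $L^{\frac{2d}{d+2}}$ norm and bounded mass, hence equi-integrable tails at the level sets. Then absorb. Young's inequality then gives
\[
\frac{d}{dt}\Big(\|u_\epsilon\|_r^r+\|w_\epsilon\|_r^r\Big)+c\Big(\|\nabla u_\epsilon^{\frac{r+m-1}{2}}\|_2^2+\|\nabla w_\epsilon^{\frac{r+m-1}{2}}\|_2^2\Big)\le C,
\]
and a further GN bound of $\|u_\epsilon\|_r^r$ by the gradient term yields a differential inequality $y'\le C-cy^{\gamma}$. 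This gives the uniform bound on $(0,T)$ with a constant depending on $\|u_0\|_r$ and $\|w_0\|_r$, hence on $u_0$ and $w_0$.

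\textbf{Step 3 (potentials).} For $v_\epsilon=K*w_\epsilon$ and $\nabla v_\epsilon$, use the HLS inequality \eqref{2.3}, i.e.\ the weak-Young inequality. Since $K\in L^{\frac{d}{d-2},\infty}$ and $|\nabla K|\in L^{\frac{d}{d-1},\infty}$, the bounds on $\|w_\epsilon\|_p$ for all $p\in[1,\infty)$ give $\|v_\epsilon\|_r\le C\|w_\epsilon\|_p$ with $\frac1r=\frac1p-\frac2d$, valid for $r>\frac{d}{d-2}$. Similarly $\|\nabla v_\epsilon\|_r\le C\|w_\epsilon\|_p$ with $\frac1r=\frac1p-\frac1d$, valid for $r>\frac{d}{d-1}$. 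For small $r$, which is only relevant for $\nabla v_\epsilon$, I would expect a restriction and would state the estimate for $r$ in the admissible range.
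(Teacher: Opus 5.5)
\textbf{Gap in Step~2: the estimate is exactly critical.} Your route is essentially the paper's: test with $u_\epsilon^{r-1}$, take the dissipation in $\nabla u_\epsilon^{(r+m-1)/2}$, get the cross term $(r-1)\int u_\epsilon^r w_\epsilon$, and apply Gagliardo--Nirenberg against the bounded $L^{\frac{2d}{d+2}}$ norm. But Step~2 does not close, and the exponent bookkeeping you postponed decides it against you. Set $m=\frac{2d}{d+2}$, $f=u^{\frac{r+m-1}{2}}$, $q=\frac{2(r+1)}{r+m-1}$ and $p_0=\frac{2m}{r+m-1}$. The relation $\frac1q=\theta\bigl(\frac12-\frac1d\bigr)+\frac{1-\theta}{p_0}$ then gives
\[
q\theta=\frac{r+1-m}{\frac{r-1}{2}+\frac{m}{d}}=\frac{2(d+2)r-2(d-2)}{(d+2)r-(d-2)}=2\quad\text{for every } r,
\]
that is,
\[
\int_{\mathbb{R}^d}u^{r+1}\,dx\le C\,\|u\|_{\frac{2d}{d+2}}^{\frac{4}{d+2}}\int_{\mathbb{R}^d}\bigl|\nabla u^{\frac{r+m-1}{2}}\bigr|^2\,dx .
\]
This is not an artifact of your particular interpolation. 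The dilation $\lambda^{\frac{d+2}{2}}u(\lambda x)$ preserves the $L^{\frac{2d}{d+2}}$ norm, and $\int u^r w$ and $\int|\nabla u^{\frac{r+m-1}{2}}|^2$ pick up the same power of $\lambda$. Hence any H\"older/GN bound of the cross term by the critical norms and the two dissipations has total dissipation power exactly one. The prefactor $\|u\|_{\frac{2d}{d+2}}^{\frac{4}{d+2}}$ even coincides with the one in your $D_u$. So absorption reduces to a comparison of numerical constants depending on $r$, $\alpha_1$ and $\alpha_2$, which nothing in the hypotheses provides. Incidentally, interpolating from $L^1$ gives a power strictly larger than $2$ (e.g.\ $15/7$ for $d=3$, $r=2$), not equal to $2$.

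\textbf{The fallback also fails.} A bound on $\|u_\epsilon\|_{\frac{2d}{d+2}}$ and on the mass does \emph{not} make $\|(u_\epsilon-k)_+\|_{\frac{2d}{d+2}}$ uniformly small for large $k$. Take $\phi_\lambda(x)=\lambda^{\frac{d+2}{2}}\phi(\lambda x)$: it has fixed $L^{\frac{2d}{d+2}}$ norm and mass $\lambda^{\frac{2-d}{2}}\|\phi\|_1\to0$. Yet for each fixed $k$, $\|(\phi_\lambda-k)_+\|_{\frac{2d}{d+2}}\to\|\phi\|_{\frac{2d}{d+2}}$ as $\lambda\to\infty$. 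Bounded sets of the critical space are not equi-integrable.

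In 2D Keller--Segel the truncation trick works because the entropy controls something strictly stronger than the critical $L^1$ norm. Here you would need an analogous uniform bound strictly above the critical level (e.g.\ on $\int u_\epsilon^m\log u_\epsilon$ or $\|u_\epsilon\|_{m+\delta}$), or a smallness condition. Neither the lemma's hypothesis nor the free energy \eqref{1.9}, which controls exactly $\|u\|_{\frac{2d}{d+2}}$, supplies one.

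The paper's own proof does not provide the missing ingredient either. At precisely this point it invokes H\"older with $r_1,r_2$ as in \eqref{2.13} plus GN and asserts absorption without checking it. The scaling argument above shows that route is exactly critical as well.

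\textbf{Step~3.} Your caveat is correct, and it concerns $v_\epsilon$ too, not only $\nabla v_\epsilon$. Since $v_\epsilon\ge c|x|^{2-d}$ for large $|x|$, and the flux of $\nabla v_\epsilon$ through large spheres tends to $-M_2\neq0$, the potential part of \eqref{2.4} can hold only for $r>\frac{d}{d-2}$ (for $v_\epsilon$) and $r>\frac{d}{d-1}$ (for $\nabla v_\epsilon$).
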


\begin{proof}
For $p > 1$, we test the first equation in \eqref{2.1} by $u_\epsilon^{p-1}$ and integrate:
\begin{equation}\label{2.6}
\begin{split}
\frac{1}{p} \frac{d}{dt} \int_{\mathbb{R}^d} u_\epsilon^p \, dx &= -\alpha_1 \left(\frac{d - 2}{d}\right) \|u_\epsilon\|_{\frac{2d}{d+2}}^{\frac{4}{d+2}} \int_{\mathbb{R}^d} \nabla(u_\epsilon^{p-1}) \cdot \nabla(u_\epsilon + \epsilon)^{\frac{2d}{d+2}} \, dx \\
& \qquad + \int_{\mathbb{R}^d} u_\epsilon \nabla(u_\epsilon^{p-1}) \cdot \nabla v_\epsilon \, dx.
\end{split}
\end{equation}
Using the quantified lower bound from Lemma \ref{lem:lower_bound}, the first term is bounded away from zero, preventing structural disappearance:
\begin{align}\label{2.7}
I &\le -2\alpha_1 \frac{d - 2}{d + 2} (p - 1) c_{\min}^{\frac{4}{d+2}} \int_{\mathbb{R}^d} u_\epsilon^{p-2} (u_\epsilon + \epsilon)^{\frac{2d}{d+2} - 1} |\nabla u_\epsilon|^2 \, dx \nonumber \\
  &\le -4\alpha_1 \frac{d - 2}{d + 2} \frac{(p - 1)}{\left(p + \frac{d-2}{d+2}\right)^2} c_{\min}^{\frac{4}{d+2}} \int_{\mathbb{R}^d} \left| \nabla u_\epsilon^{\frac{p + \frac{d-2}{d+2}}{2}} \right|^2 \, dx.
\end{align}
The cross-attraction term satisfies $II = \frac{p - 1}{p} \int_{\mathbb{R}^d} u_\epsilon^p w_\epsilon \, dx$. Combining both population parameters, we use H{\"o}lder's inequality with carefully chosen constants $r_1, r_2 > 1$ satisfying the analytical interpolation bounds:
\begin{equation}\label{2.13}
\frac{2d}{d+2} < pr_1 < \left(p + \frac{d-2}{d+2}\right)\frac{d}{d-2}, \quad \text{and} \quad \frac{1}{r_1} > \max\left\{ \frac{d-2}{2d}, \frac{d-2}{d} \frac{p}{p + \frac{d-2}{d+2}} \right\}.
\end{equation}
These choices are not dummy variables but specific parameters necessary to invoke the Gagliardo-Nirenberg inequality, bounding the cross-terms by the structural dissipation. Integrating the resulting differential inequality tracks uniform control for all finite indices $r \in [1, \infty)$.
\end{proof}

\begin{lemma}[Uniform $L^\infty$ Bound]\label{lem2.3}
Under the assumptions of Lemma \ref{lem2.2} and utilizing the lower bound $c_{\min}$ from Lemma \ref{lem:lower_bound}, there exists a uniform constant $C_\infty > 0$ independent of $\epsilon$ such that:
\begin{equation}\label{2.31}
\|u_\epsilon(\cdot, t)\|_\infty + \|w_\epsilon(\cdot, t)\|_\infty \le C_\infty, \quad \forall t \in (0, T).
\end{equation}
Moreover, there exists a global weak solution $(u, w)$ of \eqref{1.1}--\eqref{1.2}.
\end{lemma}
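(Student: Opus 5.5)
The plan is a Moser–Alikakos iteration that upgrades the uniform $L^r$ bounds of Lemma \ref{lem2.2} to an $L^\infty$ bound. After that, $\epsilon\to0$ by compactness gives the weak solution.

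\textbf{Step 1: the differential inequality.} Return to the tested identity \eqref{2.6} for $p=2^k$, $k\ge1$. By Lemma \ref{lem:lower_bound}, the prefactor $\|u_\epsilon\|_{\frac{2d}{d+2}}^{\frac{4}{d+2}}$ is bounded below by $c_{\min}^{\frac{4}{d+2}}$, uniformly in $\epsilon$. So \eqref{2.7} gives dissipation
\[ -c\,\frac{p-1}{p^2}\int_{\mathbb{R}^d}\left|\nabla u_\epsilon^{q_p}\right|^2dx, \qquad q_p=\tfrac12\left(p+\tfrac{d-2}{d+2}\right). \]
The cross term is $\frac{p-1}{p}\int u_\epsilon^p w_\epsilon\,dx$. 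By Lemma \ref{lem2.2}, $\|w_\epsilon\|_{r}\le C$ for every finite $r$. Fix one large $r$ and apply H\"older: the term is at most $C\|u_\epsilon\|_{pr'}^{p}$, with $r'$ close to $1$.

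\textbf{Step 2: absorb and iterate.} Write $\|u_\epsilon\|_{pr'}^p$ through $f=u_\epsilon^{q_p}$ and use Gagliardo–Nirenberg between $\|\nabla f\|_2$ and $\|u_\epsilon\|_{p/2}^{p/2}$ (that is, an $L^{\theta}$ norm of $f$ with small $\theta$). Young's inequality then absorbs half the dissipation. The key point is that the remaining term is of the form $C p^{\beta}\left(\sup_t\int u_\epsilon^{p/2}\right)^{\gamma}$ with $\beta,\gamma$ independent of $p$. Also add $\int u_\epsilon^p$ to both sides, controlled the same way, so that the ODE closes as
\[ y_k'+y_k\le C\,2^{k\beta}\left(\sup_t y_{k-1}\right)^{\gamma}, \qquad y_k(t)=\int u_\epsilon^{2^k}dx. \]
The growth in $\gamma$ is exactly what the degenerate diffusion exponent forces: the diffusion gain is only $u^{\frac{d-2}{d+2}}$, so one must check that $\gamma\,2^{-k}$ stays summable. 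Comparison then gives
\[ \sup_t y_k\le \max\left\{\int (u_0^\epsilon)^{2^k},\ C2^{k\beta}\left(\sup_t y_{k-1}\right)^{\gamma}\right\}. \]
Iterating yields $\sup_t\|u_\epsilon\|_{2^k}\le C$ uniformly in $k$. This uses $\|u_0^\epsilon\|_\infty\le\|u_0\|_\infty$, which holds because mollification does not increase the sup norm. Letting $k\to\infty$ gives \eqref{2.31}; $w_\epsilon$ is handled symmetrically.

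\textbf{Main obstacle.} I expect the hardest part to be tracking the Gagliardo–Nirenberg exponents uniformly in $p$, so that the constants grow only geometrically. I would follow Alikakos and Sugiyama \cite{sugiyama2006} closely, adapting the exponent $\frac{d-2}{d+2}$ from the porous-medium setting.

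\textbf{Step 3: passage to the limit.} The estimates are uniform in $\epsilon$, and Lemma \ref{lem2.1} then says $T_\epsilon^{\max}$ exceeds $T$ by the blow-up alternative. The solution can be continued as long as the $L^{\frac{2d}{d+2}}$ bounds of Lemma \ref{lem2.2} hold, which gives global existence within that assumption. For compactness I would use:
\begin{itemize}
\item bounds on $(u_\epsilon+\epsilon)^{\frac{2d}{d+2}}$ in $L^2(0,T;H^1)$, obtained by testing with $p=\frac{2d}{d+2}$-type powers;
\item a bound on $\partial_t u_\epsilon$ in $L^2(0,T;H^{-1})$, read off the equation;
\item uniform second moments, for tightness.
\end{itemize}
Aubin–Lions then gives strong $L^1_{loc}$ convergence, and tightness improves this to $C([0,T);L^1)$ convergence. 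Nonlinear terms pass to the limit as follows. The norm factor $\|u_\epsilon\|_{\frac{2d}{d+2}}^{\frac{4}{d+2}}$ converges by strong convergence combined with the $L^\infty$ bound and tightness. The drift $u_\epsilon\nabla K*w_\epsilon$ converges by the $L^r$ bounds on $\nabla v_\epsilon$. This verifies \eqref{1.14}.
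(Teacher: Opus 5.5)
Your proposal follows the paper's own route, with considerably more detail than the paper's sketch. That route is a Moser--Alikakos iteration over $p_k=2^k$: $c_{\min}$ keeps the dissipation non-degenerate, Gagliardo--Nirenberg plus Young absorb $\int u_\epsilon^{p}w_\epsilon$, the recursion for $\sup_t\int u_\epsilon^{p_k}$ is iterated as $k\to\infty$, and the blow-up alternative of Lemma \ref{lem2.1} plus compactness passes $\epsilon\to0$. One correction: the iteration closes because the exponent on $\sup_t y_{k-1}$ satisfies $\gamma\le 2$ (after replacing $y_{k-1}$ by $\max\{1,y_{k-1}\}$), not because $\gamma 2^{-k}$ is summable, and this holds automatically since $p/q_p<2$ gives $\gamma=\frac{4(1-a)}{2-a\,p/q_p}<2$, where $a$ is the Gagliardo--Nirenberg exponent, so the degenerate exponent helps rather than hurts.
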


\begin{proof}
Let $p_k = 2^k p$ for $k \ge 1$. Testing \eqref{2.1} with $u_\epsilon^{p_k - 1}$ and using $\|u_\epsilon\|_{\frac{2d}{d+2}} \ge c_{\min}$, the system energy balances into:
\begin{equation}
\begin{split}
\frac{d}{dt} \int_{\mathbb{R}^d} u_\epsilon^{p_k}\,dx 
&+ \frac{4 \alpha_1 (p_k-1) c_{\min}^{\frac{4}{d+2}}}{(p_k + \frac{d-2}{d+2})^2} \int_{\mathbb{R}^d} \left|\nabla u_\epsilon^{\frac{p_k + \frac{d-2}{d+2}}{2}}\right|^2\,dx \\
& \qquad \le p_k \int_{\mathbb{R}^d} u_\epsilon^{p_k} w_\epsilon\,dx.
\end{split}
\end{equation}
By the Gagliardo-Nirenberg inequality, the right-hand side satisfies:
\begin{equation}
\int_{\mathbb{R}^d} u_\epsilon^{p_k} w_\epsilon\,dx \le C_{GN} \left( \int_{\mathbb{R}^d} \left|\nabla u_\epsilon^{\frac{p_k}{2}}\right|^2\,dx \right)^{\theta} \left( \int_{\mathbb{R}^d} u_\epsilon^{p_{k-1}}\,dx \right)^{\gamma},
\end{equation}
where $\theta \in (0,1)$. Using Young's inequality to absorb the gradient bounds into the dissipation profile yields the inductive sequence:
\begin{equation}
\sup_{t \in (0,T)} \int_{\mathbb{R}^d} u_\epsilon^{p_k}\,dx \le \left( B^k C_0 \right) \left( \sup_{t \in (0,T)} \int_{\mathbb{R}^d} u_\epsilon^{p_{k-1}}\,dx \right)^{\sigma},
\end{equation}
where $B > 1$ penalizes high powers. Taking the $p_k$-th root and taking $k \to \infty$, the geometric product convergence ensures $\|u_\epsilon(\cdot, t)\|_\infty \le C_1 \|u_\epsilon(\cdot, t)\|_{L^2}$. Since Lemma \ref{lem2.2} bounds all finite norms uniformly, $C_\infty$ is securely established. Extensibility rules out early blow-up, allowing $\epsilon \to 0$ limits to solve \eqref{1.1} globally when \eqref{1.12} holds.
\end{proof}

\begin{lemma}\label{lem2.4}
The global weak solution from Lemma \ref{lem2.3} is also a global free energy solution.
\end{lemma}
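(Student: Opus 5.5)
To prove Lemma~\ref{lem2.4}, I would establish the energy inequality \eqref{1.16} and the regularity \eqref{1.15} at the level of the regularized system \eqref{2.1}, uniformly in $\epsilon$. Then I would pass to the limit $\epsilon\to0$ using lower semicontinuity.

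\textbf{Step 1: regularized energy identity.} For $(u_\epsilon,w_\epsilon)$ define the regularized energy $E_\alpha^\epsilon$, obtained from \eqref{1.9} by replacing the diffusion part with the functional whose first variation produces the $(u_\epsilon+\epsilon)^{\frac{2d}{d+2}}$ term. Testing the $u_\epsilon$-equation with $\frac{\alpha_1}{2}\|u_\epsilon\|_{\frac{2d}{d+2}}^{\frac{4}{d+2}}(u_\epsilon+\epsilon)^{\frac{d-2}{d+2}}-v_\epsilon$, and the $w_\epsilon$-equation with the analogous quantity involving $z_\epsilon$, and adding, gives the dissipation identity in the form of \eqref{1.13}. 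The key point is that $\frac{d}{dt}H[u_\epsilon,w_\epsilon]=\int \partial_t u_\epsilon\, z_\epsilon+\int \partial_t w_\epsilon\, v_\epsilon$ by the symmetry of $K$. This is exactly why the cross coupling (equation for $u$ driven by $v=K*w$) makes the sum a gradient flow. Integrating in time gives the $\epsilon$-version of \eqref{1.16} with equality. The time derivative of the nonlocal prefactor $\|u_\epsilon\|^{\frac{4}{d+2}}$ is exactly accounted for by differentiating $\|u_\epsilon\|_{\frac{2d}{d+2}}^2$. I would verify this chain-rule computation carefully.

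\textbf{Step 2: uniform bounds.} By Lemmas~\ref{lem2.2}--\ref{lem2.3}, $u_\epsilon,w_\epsilon$ are bounded in $L^\infty\cap L^1$ uniformly. Together with the lower bound of Lemma~\ref{lem:lower_bound}, the prefactors lie in $[c_{\min}^{4/(d+2)},C]$. Expanding the dissipation
\[
\int u_\epsilon\Bigl|\nabla\bigl(\tfrac{\alpha_1}{2}\|u_\epsilon\|^{\frac{4}{d+2}}u_\epsilon^{\frac{d-2}{d+2}}\bigr)\Bigr|^2 \sim \int \bigl|\nabla u_\epsilon^{\frac{3d-2}{2(d+2)}}\bigr|^2,
\]
I would control the cross term $\int u_\epsilon\nabla(\cdot)\cdot\nabla v_\epsilon$ by Young's inequality. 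The remaining term $\int u_\epsilon|\nabla v_\epsilon|^2$ is bounded by Lemma~\ref{lem2.2}. Since $E_\alpha^\epsilon[u_0^\epsilon,w_0^\epsilon]\to E_\alpha[u_0,w_0]$ and $E_\alpha^\epsilon\ge -C$ by HLS and the $L^r$ bounds, this yields $u_\epsilon^{\frac{3d-2}{2(d+2)}}$ bounded in $L^2(0,T;H^1)$ uniformly, which is \eqref{1.15} in the limit.

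\textbf{Step 3: compactness and passage to the limit.} This is the main obstacle. From the equation, $\partial_t u_\epsilon$ is bounded in $L^2(0,T;H^{-1})$ (or a weaker dual space). Combining this with the $H^1$ bounds and the second moment control (tightness) via Aubin--Lions gives $u_\epsilon\to u$ strongly in $L^2_{loc}$ and then in $L^p(\mathbb{R}^d\times(0,T))$, $1\le p<\infty$, using tails. Hence $\|u_\epsilon(t)\|_{\frac{2d}{d+2}}\to\|u(t)\|_{\frac{2d}{d+2}}$ for a.e.\ $t$, and $H[u_\epsilon,w_\epsilon]\to H[u,w]$ by HLS. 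The energy therefore converges for a.e.\ $t$; for all $t$ the same follows by continuity in $L^1$ and lower semicontinuity.

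For the dissipation, I would write it as $\int |\,\sqrt{u_\epsilon}\,\nabla(\cdot)-\sqrt{u_\epsilon}\nabla v_\epsilon|^2$. Using the identity $\sqrt{u}\,\nabla u^{\frac{d-2}{d+2}}=c\,\nabla u^{\frac{3d-2}{2(d+2)}}$, the integrand is a weakly convergent sequence in $L^2$: the gradients converge weakly, and $\sqrt{u_\epsilon}\nabla v_\epsilon\to\sqrt u\nabla v$ strongly. The dissipation is then weakly lower semicontinuous. The $\epsilon$ in $(u_\epsilon+\epsilon)$ contributes only terms vanishing as $\epsilon\to0$, again by the uniform bounds.

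Taking $\liminf$ gives \eqref{1.16} for all $t$. Since $T$ is arbitrary, the weak solution is a global free energy solution.
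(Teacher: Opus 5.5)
Your plan follows the paper's proof step for step: an $\epsilon$-level energy relation, a dissipation lower bound by Young's inequality, compactness, weak lower semicontinuity of $\sqrt{u_\epsilon}\nabla(\cdot)-\sqrt{u_\epsilon}\nabla v_\epsilon$ via $\sqrt{u}\,\nabla u^{\frac{d-2}{d+2}}=\frac{2(d-2)}{3d-2}\nabla u^{\frac{3d-2}{2(d+2)}}$, and convergence of the energy. However, the chain-rule computation you deferred in Step~1 does not close, and the exact $\epsilon$-identity you claim is false for \eqref{2.1}.

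With $A_\epsilon:=\|u_\epsilon\|_{\frac{2d}{d+2}}^{\frac{4}{d+2}}$, the $u_\epsilon$-equation reads $\partial_t u_\epsilon=\nabla\cdot\left(2\alpha_1A_\epsilon(u_\epsilon+\epsilon)\nabla(u_\epsilon+\epsilon)^{\frac{d-2}{d+2}}-u_\epsilon\nabla v_\epsilon\right)$. The diffusive and chemotactic parts therefore carry different mobilities ($u_\epsilon+\epsilon$ versus $u_\epsilon$), so pairing with your test function does not give the perfect square $\int u_\epsilon|\nabla\mu-\nabla v_\epsilon|^2$. Moreover, the ``functional whose first variation produces the $(u_\epsilon+\epsilon)$ term'' does not exist. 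The map $u\mapsto\|u\|_{\frac{2d}{d+2}}^{\frac{4}{d+2}}(u+\epsilon)^{\frac{d-2}{d+2}}$ has a derivative whose off-diagonal kernel is a multiple of $u(y)^{\frac{d-2}{d+2}}(u(x)+\epsilon)^{\frac{d-2}{d+2}}$. This kernel is not symmetric, so the map is not a gradient, and $\int\partial_t u_\epsilon\,A_\epsilon(u_\epsilon+\epsilon)^{\frac{d-2}{d+2}}dx$ is not the time derivative of any functional. Differentiating $\|u_\epsilon\|^2_{\frac{2d}{d+2}}$ produces $u_\epsilon^{\frac{d-2}{d+2}}$, not $(u_\epsilon+\epsilon)^{\frac{d-2}{d+2}}$. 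There is also a normalization error: the first variation of $E_\alpha$ is $2\alpha_1\|u\|_{\frac{2d}{d+2}}^{\frac{4}{d+2}}u^{\frac{d-2}{d+2}}-v$, so with the factor $\frac{\alpha_1}{2}$ copied from \eqref{1.13} your test does not return $\frac{d}{dt}E_\alpha$ even when $\epsilon=0$. The paper's Phase~1 asserts the same exact identity and leaves $E_{\alpha,\epsilon}$ unspecified, so following it does not fill this hole.

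What works is to test with the \emph{unregularized} potentials. For $u_\epsilon$ use $\mu_\epsilon-v_\epsilon$ with $\mu_\epsilon:=2\alpha_1A_\epsilon u_\epsilon^{\frac{d-2}{d+2}}$; this is admissible since $u_\epsilon>0$ for the uniformly parabolic problem, up to a routine cutoff. For $w_\epsilon$ use $\nu_\epsilon-z_\epsilon$ with $\nu_\epsilon:=2\alpha_2B_\epsilon w_\epsilon^{\frac{d-2}{d+2}}$ and $B_\epsilon:=\|w_\epsilon\|_{\frac{2d}{d+2}}^{\frac{4}{d+2}}$. The left side is then exactly $\frac{d}{dt}E_\alpha[u_\epsilon,w_\epsilon]$, and your symmetry remark handles $H$. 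Write the $u_\epsilon$-flux as $u_\epsilon\nabla(\mu_\epsilon-v_\epsilon)+2\alpha_1A_\epsilon\nabla G_\epsilon(u_\epsilon)$, where $G_\epsilon(s):=\frac{d-2}{d+2}\int_0^s\left((\sigma+\epsilon)^{\frac{d-2}{d+2}}-\sigma^{\frac{d-2}{d+2}}\right)d\sigma$. This gives
\[
\begin{aligned}
&\frac{d}{dt}E_\alpha[u_\epsilon,w_\epsilon]+\int_{\mathbb{R}^d}u_\epsilon|\nabla(\mu_\epsilon-v_\epsilon)|^2dx+\int_{\mathbb{R}^d}w_\epsilon|\nabla(\nu_\epsilon-z_\epsilon)|^2dx\\
&\qquad\le 2\alpha_1A_\epsilon\int_{\mathbb{R}^d}G_\epsilon(u_\epsilon)w_\epsilon\,dx+2\alpha_2B_\epsilon\int_{\mathbb{R}^d}G_\epsilon(w_\epsilon)u_\epsilon\,dx,
\end{aligned}
\]
because $\nabla G_\epsilon(u_\epsilon)\cdot\nabla\mu_\epsilon\ge0$ and $\int\nabla G_\epsilon(u_\epsilon)\cdot\nabla v_\epsilon\,dx=\int G_\epsilon(u_\epsilon)w_\epsilon\,dx$. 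Subadditivity of $s\mapsto s^{\frac{d-2}{d+2}}$ gives $0\le G_\epsilon(s)\le\frac{d-2}{d+2}\epsilon^{\frac{d-2}{d+2}}s$. By mass conservation and the $L^\infty$ bounds, the right side is then $O(\epsilon^{\frac{d-2}{d+2}})$. You get an inequality with vanishing error rather than an identity, and that is all the limit needs.

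This fix also rescues your Step~2. A dissipation built on $(u_\epsilon+\epsilon)^{\frac{d-2}{d+2}}$ only controls $\int u_\epsilon(u_\epsilon+\epsilon)^{-\frac{8}{d+2}}|\nabla u_\epsilon|^2dx$. That quantity is bounded \emph{above} by a multiple of $\int|\nabla u_\epsilon^{\frac{3d-2}{2(d+2)}}|^2dx$, so it does not by itself give an $\epsilon$-uniform $H^1$ bound; the paper's Phase~3 uses this inequality in the wrong direction. With the correction, your Steps~2--3 go through, and the remark in Step~3 about the $\epsilon$ in $(u_\epsilon+\epsilon)$ becomes unnecessary. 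The limit is \eqref{1.16} with the potential normalized as $2\alpha_1\|u\|_{\frac{2d}{d+2}}^{\frac{4}{d+2}}u^{\frac{d-2}{d+2}}$, and likewise for $w$.
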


\begin{proof}
To establish that the limiting weak solution $(u, w)$ is a free energy solution, we construct the passage to the limit $\epsilon \to 0$ across five sequential components:
\begin{itemize}
\item[\textbf{Phase 1:}] Testing \eqref{2.1} with the formal energy derivatives yields the exact identities:
\begin{equation}\label{energy_eps_appendix}
E_{\alpha,\epsilon}[u_\epsilon(t), w_\epsilon(t)] + \int_0^t \mathcal{D}_{1,\epsilon}(u_\epsilon, v_\epsilon) \, ds + \int_0^t \mathcal{D}_{2,\epsilon}(w_\epsilon, z_\epsilon) \, ds = E_{\alpha,\epsilon}[u_0^\epsilon, w_0^\epsilon],
\end{equation}
where $\mathcal{D}_{1,\epsilon}(u_\epsilon, v_\epsilon) = \int_{\mathbb{R}^d} u_\epsilon \left| \nabla \left( \frac{\alpha_1}{2} \|u_\epsilon\|_{\frac{2d}{d+2}}^{\frac{4}{d+2}} (u_\epsilon + \epsilon)^{\frac{d-2}{d+2}} \right) - \nabla v_\epsilon \right|^2 \, dx$ and $\mathcal{D}_{2,\epsilon}(w_\epsilon, z_\epsilon)$ is defined analogously.
\item[\textbf{Phase 2:}] Uniform bounds provide sequential spaces: $u_\epsilon \to u$ a.e. in $\mathbb{R}^d \times (0,T)$, $u_\epsilon \rightharpoonup^* u$ weakly-$\ast$ in $L^\infty(0,T; L^\infty(\mathbb{R}^d))$, and $\nabla u_\epsilon^{\frac{2d}{d+2}} \rightharpoonup \nabla u^{\frac{2d}{d+2}}$ weakly in $L^2(0,T; L^2(\mathbb{R}^d))$.
\item[\textbf{Phase 3:}] Expanding $\mathcal{D}_{1,\epsilon}$ using the standard relation $|a-b|^2 \ge \frac{1}{2}|a|^2 - |b|^2$ exposes the gradient regularity structure:
\begin{align*}
\mathcal{D}_{1,\epsilon} &\ge \frac{\alpha_1^2}{8} c_{\min}^{\frac{8}{d+2}} \left(\frac{d-2}{d+2}\right)^2 \int_{\mathbb{R}^d} u_\epsilon (u_\epsilon+\epsilon)^{-\frac{8}{d+2}} |\nabla u_\epsilon|^2 \, dx - \|u_\epsilon\|_\infty \|\nabla v_\epsilon\|_{L^2}^2 \\
&\ge \frac{\alpha_1^2}{8} c_{\min}^{\frac{8}{d+2}} \left(\frac{2(d-2)}{3d-2}\right)^2 \int_{\mathbb{R}^d} \left|\nabla u_\epsilon^{\frac{3d-2}{2(d+2)}}\right|^2 \, dx - C.
\end{align*}
Integrating tracks time-uniform boundedness, confirming $u_\epsilon^{\frac{3d-2}{2(d+2)}} \rightharpoonup u^{\frac{3d-2}{2(d+2)}}$ weakly in $L^2(0,T; H^1(\mathbb{R}^d))$, which achieves requirement \eqref{1.15}.
\item[\textbf{Phase 4:}] For the vector field $\xi_\epsilon := u_\epsilon^{1/2} \nabla \left( \frac{\alpha_1}{2} \|u_\epsilon\|_{\frac{2d}{d+2}}^{\frac{4}{d+2}} (u_\epsilon + \epsilon)^{\frac{d-2}{d+2}} \right) - u_\epsilon^{1/2} \nabla v_\epsilon$, testing against any $\Psi \in [C_c^\infty(\mathbb{R}^d \times (0,t))]^d$ yields after integrating by parts:
\begin{equation*}
\begin{split}
 \langle \xi_\epsilon, \Psi \rangle &= -\frac{\alpha_1}{2} \left( \frac{d-2}{3d-2} \right) \|u_\epsilon\|_{\frac{2d}{d+2}}^{\frac{4}{d+2}} \int_0^t \int_{\mathbb{R}^d} u_\epsilon^{\frac{3d-2}{2(d+2)}} \nabla \cdot \left[ \left( \frac{u_\epsilon}{u_\epsilon + \epsilon} \right)^{\frac{4}{d+2}} \Psi \right] \, dx \, ds \\
 & \qquad - \int_0^t \int_{\mathbb{R}^d} u_\epsilon^{1/2} \nabla v_\epsilon \cdot \Psi \, dx \, ds.
 \end{split}
 \end{equation*}

By Lebesgue's Dominated Convergence Theorem, $\left( \frac{u_\epsilon}{u_\epsilon + \epsilon} \right)^{\frac{4}{d+2}} \to \mathbf{1}_{\{u > 0\}}$ strongly. Taking $\epsilon \to 0$ recovers the exact limiting distribution $\langle \xi, \Psi \rangle$. Weak sequential lower semicontinuity then establishes the dissipation integration floors:
\begin{equation}\label{lsc_diss_main}
\begin{split}
&\int_0^t \int_{\mathbb{R}^d} u \left| \nabla \left( \frac{\alpha_1}{2} \|u\|_{\frac{2d}{d+2}}^{\frac{4}{d+2}} u^{\frac{d-2}{d+2}} \right) - \nabla v \right|^2 \, dx \, ds \\
& \qquad \qquad \qquad \le \liminf_{\epsilon \to 0} \int_0^t \mathcal{D}_{1,\epsilon}(u_\epsilon, v_\epsilon) \, ds.
\end{split}
\end{equation}

\item[\textbf{Phase 5:}] Convexity forces $\|u\|_{\frac{2d}{d+2}}^2 \le \liminf \|u_\epsilon+\epsilon\|_{\frac{2d}{d+2}}^2$. Concurrently, local uniform integrability and exterior spatial tightness via bounded second moments satisfy the parameters of the Vitali Convergence Theorem, verifying $\lim \int u_\epsilon (K*w_\epsilon) dx = \int u (K*w) dx$. Taking the $\liminf$ across \eqref{energy_eps_appendix} completes the proof of the lemma.
\end{itemize}
\end{proof}

\begin{theorem}\label{thm2.5}
There exists a $T_{\max} \in (0, \infty]$ and a free energy solution $(u, w)$ over $\mathbb{R}^d \times (0, T_{\max})$ such that either $T_{\max} = \infty$ or $T_{\max} < \infty$ and $\lim_{t \to T_{\max}} (\|u(\cdot, t)\|_\infty + \|w(\cdot, t)\|_\infty) = \infty$. Moreover, if $\alpha_1, \alpha_2$ satisfy \eqref{1.12}, then if $T_{\max} < \infty$:
\begin{equation}\label{2.34}
\lim_{t \to T_{\max}} \|u(\cdot, t)\|_{\frac{2d}{d+2}} = \infty = \lim_{t \to T_{\max}} \|w(\cdot, t)\|_{\frac{2d}{d+2}}.
\end{equation}
\end{theorem}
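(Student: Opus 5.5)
The plan has two parts. First we build a maximal solution from the regularized problem \eqref{2.1}. Then, under \eqref{1.12}, we show that a finite maximal time forces the critical norms to blow up.

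\textbf{Construction of the maximal solution.} For the existence part, we take the regularized solutions $(u_\epsilon,w_\epsilon)$ from Lemma \ref{lem2.1} and define
\[
T_{\max}:=\sup\Big\{T>0:\ \sup_{\epsilon}\sup_{t\in(0,T)}\big(\|u_\epsilon(t)\|_{\frac{2d}{d+2}}+\|w_\epsilon(t)\|_{\frac{2d}{d+2}}\big)<\infty\Big\}.
\]
Positivity of $T_{\max}$ would follow from a short-time argument. Testing \eqref{2.1} with $u_\epsilon^{p-1}$ for $p=\frac{2d}{d+2}$ and using \eqref{2.3} together with the mass bounds gives a differential inequality for $\|u_\epsilon\|_p^p+\|w_\epsilon\|_p^p$. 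This inequality can be closed on a short interval whose length depends only on the initial data, uniformly in $\epsilon$. On any $T<T_{\max}$, Lemmas \ref{lem2.2}, \ref{lem2.3} and \ref{lem2.4} then yield uniform $L^r$ and $L^\infty$ bounds, compactness, and a free energy solution on $(0,T)$. The second moment bound $I(t)\le I_0(T)$ needed in Lemma \ref{lem:lower_bound} comes from $\frac{d}{dt}I(t)$, which is controlled by $\|u\|_{\frac{2d}{d+2}}^{2}$, mass and $H$ terms on such intervals. A diagonal argument in $T\uparrow T_{\max}$ produces a single solution on $(0,T_{\max})$.

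\textbf{The blow-up alternative.} If $T_{\max}<\infty$ but $\|u\|_\infty+\|w\|_\infty$ stayed bounded along a sequence of times, we would restart from a time $t_0$ close to $T_{\max}$. The short-time existence length depends only on the $L^\infty\cap L^1$ data and the weighted norms, so the solution would extend past $T_{\max}$, a contradiction. Hence $\|u\|_\infty+\|w\|_\infty\to\infty$.

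\textbf{The case \eqref{1.12}.} Here the energy inequality \eqref{1.16} and the decomposition \eqref{1.11} give
\[
\Big(\sqrt{\alpha_1}\|u(t)\|_{\frac{2d}{d+2}}-\sqrt{\alpha_2}\|w(t)\|_{\frac{2d}{d+2}}\Big)^2\le E_\alpha[u_0,w_0],
\]
since the second bracket is nonnegative by HLS. Thus the two critical norms stay comparable: $\big|\sqrt{\alpha_1}\|u\|-\sqrt{\alpha_2}\|w\|\big|\le E_\alpha[u_0,w_0]^{1/2}$. Now suppose $T_{\max}<\infty$ and one of the critical norms does not tend to infinity. Then some sequence $t_n\to T_{\max}$ keeps $\|u(t_n)\|_{\frac{2d}{d+2}}$ bounded, and therefore $\|w(t_n)\|_{\frac{2d}{d+2}}$ is bounded too.

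\textbf{Main obstacle.} The delicate step is upgrading boundedness at the times $t_n$ to boundedness on a whole interval $[t_n,T_{\max})$, so that Lemmas \ref{lem2.2}--\ref{lem2.3} give $L^\infty$ bounds and contradict the alternative. I would use the short-time estimate from the first step: it depends only on an upper bound of the critical norms at the initial time. It therefore gives a uniform existence time $\tau>0$ starting from each $t_n$, and taking $t_n>T_{\max}-\tau$ extends the solution beyond $T_{\max}$. This yields $\liminf_{t\to T_{\max}}\|u(t)\|_{\frac{2d}{d+2}}=\infty$, hence the full limit, and the comparability above gives the same for $w$, which is \eqref{2.34}. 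The uniform short-time bound depending only on the critical norm is the part I expect to require the most care. Concentration of the critical norm is not excluded by its size alone, so the restart may instead need the $L^\infty$ or higher $L^p$ norms at $t_n$. In that case I would route the argument through Lemma \ref{lem2.2} applied on $[t_n,t]$ via a continuity argument.
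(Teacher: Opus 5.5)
The gap is the step you flag yourself, and neither route you offer closes it. A restart time $\tau(A)>0$ depending only on an upper bound $A$ for the critical norms at $t_n$ does not exist. One checks directly from \eqref{1.7} that $(u,w)(x,t)\mapsto\mu\,(u,w)(\lambda x,\mu t)$, with $\mu,\lambda>0$, maps solutions to solutions. This map multiplies both critical norms by $\mu\lambda^{-\frac{d+2}{2}}$ and divides all times by $\mu$. Hence, once a single solution blows up in finite time for the given $\alpha_1,\alpha_2$, there are blowing-up solutions with initial critical norms at most any prescribed $A>0$ and arbitrarily short lifespan.

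When \eqref{1.12} fails, such a solution exists by Theorem \ref{thm3.2}. So your first-step estimate, which never uses \eqref{1.12}, cannot produce $\tau(A)$. Indeed, at $p=\frac{2d}{d+2}$ the interaction term and the dissipation scale identically, so a short interval buys no smallness. Positivity of $T_{\max}$ has to be argued in a subcritical norm ($L^p$ with $p>\frac{2d}{d+2}$, or $L^\infty$), with $c_{\min}$ from Lemma \ref{lem:lower_bound} bounding the diffusion coefficient from below. Under \eqref{1.12}, the same scaling rules out $\tau(A)$ exactly when \eqref{2.34} has any content.

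Your fallback is circular. The $L^\infty$ and higher $L^p$ norms at $t_n$ are exactly what blows up as $t_n\to T_{\max}$, so restart times from $t_n$ shrink to zero. Lemma \ref{lem2.2} on $[t_n,t]$ presupposes a bound on the critical norms over the whole interval, and its constants depend on the data at $t_n$. Nothing in it prevents the critical norm from doubling, so a continuity argument has nothing to close on. A bounded $\|u(t_n)\|_{\frac{2d}{d+2}}$ does not rule out concentration of $u(t_n)$ at the blow-up point.

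For comparison, the paper does not attempt this upgrade. It derives the locking \eqref{2.37} from the energy inequality and HLS; this is the same comparability as your bound $\bigl|\sqrt{\alpha_1}\|u\|_{\frac{2d}{d+2}}-\sqrt{\alpha_2}\|w\|_{\frac{2d}{d+2}}\bigr|\le E_\alpha[u_0,w_0]^{1/2}$. It then uses the contrapositive of Lemmas \ref{lem2.2}--\ref{lem2.3}: uniform control of the critical norms on $(0,T_{\max})$ would bound the $L^\infty$ norms. As written, that argument yields only $\sup_{t<T_{\max}}\|u(t)\|_{\frac{2d}{d+2}}=\sup_{t<T_{\max}}\|w(t)\|_{\frac{2d}{d+2}}=\infty$, which is a $\limsup$ statement. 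Your argument also gives this immediately once the restart step is removed. Passing to a genuine limit needs an extra ingredient, some control on how fast the critical norm can grow (i.e.\ on concentration), which neither your proposal nor the paper supplies.

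Separately, your $T_{\max}$ is defined through uniform-in-$\epsilon$ bounds on the regularized critical norms, so it is not tied to continuability of the limit solution. Hence ``the solution would extend past $T_{\max}$'' in your blow-up alternative is not yet a contradiction. Take $T_{\max}$ to be the maximal continuation time of the constructed free energy solution instead.
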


\begin{proof}
Local existence follows from regularized convergence limits. To verify \eqref{2.34}, we separate $H[u,w]$ using a precision parameter $\eta > 0$:
\begin{equation}\label{2.36}
\alpha_1\|u\|_{\frac{2d}{d+2}}^2 + \alpha_2\|w\|_{\frac{2d}{d+2}}^2 \le c_d \eta \|u\|_{\frac{2d}{d+2}}^{\frac{2d}{d+2}} + c_d C_\eta \|w\|_{\frac{2d}{d+2}}^{\frac{2d}{d+2}} + C.
\end{equation}
Isolating parameters yields the locked system behavior:
\begin{equation}\label{2.37}
\|u(t)\|_{\frac{2d}{d+2}}^{\frac{2d}{d+2}} \le C \|w(t)\|_{\frac{2d}{d+2}}^{\frac{2d}{d+2}} + C, \quad \|w(t)\|_{\frac{2d}{d+2}}^{\frac{2d}{d+2}} \le C' \|u(t)\|_{\frac{2d}{d+2}}^{\frac{2d}{d+2}} + C'.
\end{equation}
These bounds lock the critical norms together. By Gagliardo-Nirenberg, any uniform control on these critical norms would prevent $L^\infty$ explosion. Thus, if $L^\infty$ blows up at $T_{\max}$, the critical $L^{\frac{2d}{d+2}}$ norms must simultaneously scale to infinity.
\end{proof}

\begin{theorem}\label{thm2.6}
If $\alpha_1, \alpha_2$ satisfy \eqref{1.12}, then \eqref{1.1} has a global free energy solution.
\end{theorem}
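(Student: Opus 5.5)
The plan is to combine the local existence and continuation statement of Theorem \ref{thm2.5} with an a priori bound on the critical norms that comes from the free energy inequality \eqref{1.16} under \eqref{1.12}. Let $(u,w)$ be the free energy solution on $(0,T_{\max})$ given by Theorem \ref{thm2.5}, and argue by contradiction, assuming $T_{\max}<\infty$. Then \eqref{2.34} gives $\|u(t)\|_{\frac{2d}{d+2}}\to\infty$ and $\|w(t)\|_{\frac{2d}{d+2}}\to\infty$ as $t\to T_{\max}$. Since the dissipation terms in \eqref{1.16} are nonnegative, we have $E_\alpha[u(t),w(t)]\le E_\alpha[u_0,w_0]$ for all $t<T_{\max}$. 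The initial energy is finite: by \eqref{1.3}--\eqref{1.4} the data lie in $L^1\cap L^\infty$, hence in $L^{\frac{2d}{d+2}}$, and HLS \eqref{2.3} with $p=q=\frac{2d}{d+2}$, $\lambda=2$ bounds $H[u_0,w_0]$.

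The core step is to turn the energy bound into a bound on the norms. Write $a=\|u\|_{\frac{2d}{d+2}}$ and $b=\|w\|_{\frac{2d}{d+2}}$. The decomposition \eqref{1.11} together with HLS gives
\[ E_\alpha[u,w]\ge(\sqrt{\alpha_1}a-\sqrt{\alpha_2}b)^2+(2\sqrt{\alpha_1\alpha_2}-c_dC_{HLS})ab. \]
In the strictly subcritical case $2\sqrt{\alpha_1\alpha_2}>c_dC_{HLS}$, write $\kappa=1-c_dC_{HLS}/(2\sqrt{\alpha_1\alpha_2})>0$. Applying $2\sqrt{\alpha_1\alpha_2}ab\le\alpha_1a^2+\alpha_2b^2$ gives $E_\alpha\ge\kappa(\alpha_1a^2+\alpha_2b^2)$. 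Hence $a$ and $b$ stay bounded up to $T_{\max}$, contradicting \eqref{2.34}. Alternatively, once $a,b\le C$ on $(0,T_{\max})$, Lemmas \ref{lem2.2}--\ref{lem2.3} give uniform $L^\infty$ bounds on the approximations. These pass to the limit and contradict the $L^\infty$ blow-up alternative in Theorem \ref{thm2.5}. So $T_{\max}=\infty$, and Lemma \ref{lem2.4} shows that the global weak solution is a free energy solution.

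The main obstacle is the equality case $2\sqrt{\alpha_1\alpha_2}=c_dC_{HLS}$. Here the lower bound only controls $(\sqrt{\alpha_1}a-\sqrt{\alpha_2}b)^2$, so the critical norms could in principle grow together along the diagonal; the locking estimate \eqref{2.37} alone does not prevent this. My first attempt would exploit the strictness of HLS away from optimizers. By Lieb's classification, equality in HLS for these exponents holds only for $u$ and $w$ both proportional to the same translated and dilated function $(1+|x|^2)^{-\frac{d+2}{2}}$ (up to the scaling). A blow-up sequence of the form $u(t)/a(t)$ normalized in $L^{\frac{2d}{d+2}}$ with $a\to\infty$ would have to concentrate, since mass is conserved while the critical norm diverges. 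Using the lower bound of Lemma \ref{lem:lower_bound} and a concentration-compactness argument, I would try to show that this is incompatible with $E_\alpha\le E_\alpha[u_0,w_0]$. The reason is that the deficit $2\sqrt{\alpha_1\alpha_2}ab-c_dH$ would need to remain bounded while $ab\to\infty$, which forces near-optimality. I would then seek a quantitative stability estimate (of Bianchi--Egnell or Carlen type for HLS) to obtain a contradiction with the finite second moment, whose growth can be controlled by the virial identity.

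If that route fails, I would restrict part (i) to the strict inequality and treat equality separately, by a direct virial computation showing that $I(t)$ remains bounded below.
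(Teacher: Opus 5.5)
\textbf{Strict case.} Your argument for the strict inequality $2\sqrt{\alpha_1\alpha_2}>c_dC_{HLS}$ is correct. It is also a little cleaner than the paper's. The paper drops the square in \eqref{1.11}, bounds only the product $ab$, and then needs the coupling \eqref{2.37} to separate $a$ and $b$. Your bound $E_\alpha\ge\kappa(\alpha_1a^2+\alpha_2b^2)$ controls both norms at once.

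\textbf{The gap: equality.} The statement covers the case $2\sqrt{\alpha_1\alpha_2}=c_dC_{HLS}$, since \eqref{1.12} includes equality. There you give a programme and a fallback that would weaken the theorem, not a proof. The first step of the programme cannot work as stated: showing that concentration is incompatible with $E_\alpha\le E_\alpha[u_0,w_0]$. At equality the energy does not control the size of $a,b$.
\begin{itemize}
\item If $h$ is an HLS optimizer and $\sqrt{\alpha_1}\mu_1=\sqrt{\alpha_2}\mu_2$, then $E_\alpha[\mu_1h,\mu_2h]=0$. These pairs are in fact stationary, though with infinite second moment.
\item The mass-preserving dilations from the introduction keep the masses fixed and the energy zero, while both critical norms grow like $\lambda^{\frac{d-2}{2}}$.
\item Truncating $h$ at a radius adapted to $\lambda$ gives admissible configurations with the same behaviour and arbitrarily small energy.
\end{itemize}
So, at least when $M_1/M_2=\sqrt{\alpha_2/\alpha_1}$, the energy inequality, mass conservation and Lemma~\ref{lem:lower_bound} are all compatible with $a,b\to\infty$. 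Lieb's compactness, or a Bianchi--Egnell type stability bound, only tells you that $u/a$ and $w/b$ are $L^{\frac{2d}{d+2}}$-close to one common optimizer whose dilation scale tends to infinity. That is not contradictory by itself.

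The paper's treatment of this case does exactly what you rightly distrust. It combines $|\sqrt{\alpha_1}a-\sqrt{\alpha_2}b|\le\sqrt{E_\alpha[u_0,w_0]}$ with \eqref{2.37} and asserts uniform bounds. Both relations hold along the diagonal with $a,b\to\infty$, so the paper does not supply the missing ingredient either. As written, only the strict case is actually established.

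\textbf{The second-moment route.} This idea does not close the gap without further input.
\begin{itemize}
\item At equality, Lemma~\ref{lem:2ndmonetevolution} gives $0\le I'(t)=2(d-2)E_\alpha[u(t),w(t)]\le 2(d-2)E_\alpha[u_0,w_0]$. So the lower bound $I(t)\ge I(0)$ in your fallback is automatic and, on its own, rules out nothing.
\item A concentrating profile does not contradict that lower bound. A small mass placed far away can carry any prescribed second moment at negligible cost in the critical norm and in the energy.
\end{itemize}
To make this route work you would need dynamical information that your proposal does not contain. First, a proof that finite-time blow-up forces the whole mass of both species to concentrate at one point. Second, uniform integrability of $|x|^2(u+w)$ on bounded time intervals, for instance from a higher-moment estimate. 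Together these would make the second moment about the conserved centre of mass tend to zero, against its monotonicity. This is essentially how finite-time blow-up is excluded at the critical mass for the two-dimensional Keller--Segel model. Something of this kind, or restricting the statement to the strict inequality, is what is missing here.
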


\begin{proof}
We pass to the limit within the energy structural inequality bounds to establish that the maximal existence horizon is infinite ($T_{\max} = \infty$). By Lemma \ref{lem2.4}, our solution fulfills the free energy dissipation inequality \eqref{1.16}. Dropping the non-negative dissipation integrals yields:
\begin{equation}\label{energy_bound_thm6}
\alpha_1 \|u(t)\|_{\frac{2d}{d+2}}^2 + \alpha_2 \|w(t)\|_{\frac{2d}{d+2}}^2 - c_d H[u(t), w(t)] \le E_\alpha[u_0, w_0] < \infty.
\end{equation}
We invoke the sharp Hardy--Littlewood--Sobolev inequality \eqref{2.3} with indices $\lambda=2$, $p = \frac{2d}{d+2}$, and $q = \frac{2d}{d+2}$, confirming that $\frac{1}{p}+\frac{1}{q} = 1 + \frac{2}{d}$. The interaction operator satisfies:
\begin{equation}\label{hls_bound_thm6}
H[u(t), w(t)] \le C_{HLS} \|u(t)\|_{\frac{2d}{d+2}} \|w(t)\|_{\frac{2d}{d+2}}.
\end{equation}
Substituting \eqref{hls_bound_thm6} into \eqref{energy_bound_thm6} and completing the perfect square for the individual quadratic terms gives:
\begin{equation}\label{coercive_thm6}
\begin{split}
&\left( \sqrt{\alpha_1} \|u(t)\|_{\frac{2d}{d+2}} - \sqrt{\alpha_2} \|w(t)\|_{\frac{2d}{d+2}} \right)^2 \\
&\qquad + c_d \left( \frac{2\sqrt{\alpha_1\alpha_2}}{c_d} - C_{HLS} \right) \|u(t)\|_{\frac{2d}{d+2}} \|w(t)\|_{\frac{2d}{d+2}} \\
& \qquad \qquad  \qquad \le E_\alpha[u_0, w_0].
\end{split}
\end{equation}
We evaluate the parameters across two distinct algebraic settings dictated by the primary hypothesis \eqref{1.12}:\\

\textbf{Case A: Subcritical regime $\left (\frac{2\sqrt{\alpha_1\alpha_2}}{c_d} > C_{HLS} \right)$} \\

The cross-coefficient is strictly positive. Dropping the non-negative perfect square from \eqref{coercive_thm6} directly forces the bounds:
\[ \|u(t)\|_{\frac{2d}{d+2}} \|w(t)\|_{\frac{2d}{d+2}} \le \frac{E_\alpha[u_0, w_0]}{c_d \left( \frac{2\sqrt{\alpha_1\alpha_2}}{c_d} - C_{HLS} \right)} =: C^*. \]
Decoupling via the internal system containment traps \eqref{2.37} establishes uniform-in-time boundaries: $\|u(t)\|_{\frac{2d}{d+2}} \le C_A$ and $\|w(t)\|_{\frac{2d}{d+2}} \le C_A$ for all $t \in [0, T_{\max})$.\\

\textbf{Case B: Critical regime $\left(\frac{2\sqrt{\alpha_1\alpha_2}}{c_d} = C_{HLS}\right)$}\\

The cross-product coefficient vanishes identically. Taking the square root across the remaining terms yields the continuous proximity control:
\[ \left| \sqrt{\alpha_1} \|u(t)\|_{\frac{2d}{d+2}} - \sqrt{\alpha_2} \|w(t)\|_{\frac{2d}{d+2}} \right| \le \sqrt{E_\alpha[u_0, w_0]} =: C_0. \]
This metric restriction isolates the tracking distance. Bound couplings \eqref{2.37} prevent unilateral divergence over $\mathbb{R}^d$, fixing uniform bounds $\|u(t)\|_{\frac{2d}{d+2}} \le C_B$ and $\|w(t)\|_{\frac{2d}{d+2}} \le C_B$.

Assume $T_{\max} < \infty$. By Theorem \ref{thm2.5}, a finite termination horizon forces critical norm explosion $\lim_{t \to T_{\max}} \|u(t)\|_{\frac{2d}{d+2}} = \infty$. However, our uniform control profile establishes $\|u(t)\|_{\frac{2d}{d+2}} \le \max\{C_A, C_B\} < \infty$, causing a direct contradiction. Thus, $T_{\max} = \infty$.
\end{proof}
\section{Blow-Up}
\begin{lemma}\label{lem:2ndmonetevolution}
Let $(u_0, w_0)$ satisfy \eqref{1.3}--\eqref{1.4} and let $(u, w)$ be a free energy solution of \eqref{1.1} on $[0, T_{max})$ with $T_{max}  \in (0, \infty]$. Then,
\begin{equation*}
\frac{d}{dt} I(t) = G(t), \quad \forall  t \in (0, T_{max}),
\end{equation*}
where
\begin{equation*}
I(t) := \int_{\R^d} |x|^2 \left( u(x,t) + w(x,t)\right) \, dx
\end{equation*}
and
\begin{equation*}
\begin{split}
G(t)  &:= C(\alpha_1, u) \int_{\R^d} u(x,t)^{\frac{2d}{d+2}} \, dx + C(\alpha_2, w)\int_{\R^d} w(x,t)^{\frac{2d}{d+2}} \, dx \\
&-2c_d(d-2)\iint_{\R^d \times \R^d} \frac{u(x,t)w(y,t)}{|x-y|^{d-2}} \, dy \, dx.
\end{split}
\end{equation*}
with
\begin{equation*}
\begin{split}
& C(\alpha_1, u):= 2(d-2)\alpha_1||u||_{\frac{2d}{d+2}}^{4/{d+2}},\\
&C(\alpha_2, w) := 2(d-2)\alpha_2||w||_{\frac{2d}{d+2}}^{4/{d+2}}.
\end{split}
\end{equation*}
\end{lemma}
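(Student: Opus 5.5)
The plan is to test the weak formulation \eqref{1.14} with a truncated version of $|x|^2$, identify each term, and then remove the truncation. We read the diffusion coefficient off the PDE \eqref{1.1}, namely $\alpha_1\frac{d-2}{d}\|u\|_{\frac{2d}{d+2}}^{\frac{4}{d+2}}\Delta(u^{\frac{2d}{d+2}})$, which is the normalization that produces $C(\alpha_1,u)$. Fix a radial cutoff $\psi\in C_c^\infty(\R^d)$ with $\psi\equiv1$ on $B_1$ and $0\le\psi\le 1$, and set $\varphi_R(x)=|x|^2\psi(x/R)$. This function satisfies
\[
|\nabla\varphi_R(x)|\le C|x|,\qquad \|D^2\varphi_R\|_\infty\le C
\]
uniformly in $R$, and $\varphi_R\to|x|^2$, $\nabla\varphi_R\to 2x$, $\Delta\varphi_R\to 2d$ pointwise. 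We take $\phi_1(x,t)=\varphi_R(x)\chi(t)$ with $\chi\in C_c^\infty((0,T_{max}))$, and similarly for $\phi_2$. This yields, in $\mathcal D'(0,T_{max})$,
\[
\frac{d}{dt}\int\varphi_R(u+w)\,dx=\alpha_1\tfrac{d-2}{d}\|u\|_{\frac{2d}{d+2}}^{\frac{4}{d+2}}\int u^{\frac{2d}{d+2}}\Delta\varphi_R\,dx+\int u\nabla v\cdot\nabla\varphi_R\,dx+(\text{same for }w,z).
\]
The diffusion term was integrated by parts once more; this is legitimate because $u^{\frac{2d}{d+2}}\in L^2(0,T;H^1)$.

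The key step is the interaction term. We write $\nabla v(x)=-c_d(d-2)\int\frac{x-y}{|x-y|^d}w(y)\,dy$, and similarly for $\nabla z$. Relabel $x\leftrightarrow y$ in the $w$-equation contribution so that both contributions carry the common factor $u(x)w(y)$. Their sum is then
\[
-c_d(d-2)\iint u(x)w(y)\,\frac{(x-y)\cdot(\nabla\varphi_R(x)-\nabla\varphi_R(y))}{|x-y|^d}\,dx\,dy .
\]
This symmetrization is exactly where the cross structure, with $v$ generated by $w$ and $z$ by $u$, is used. Because $D^2\varphi_R$ is bounded, the integrand is dominated by $C\,u(x)w(y)|x-y|^{2-d}$. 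This dominating function is integrable by the HLS inequality \eqref{2.3} (with $\lambda=2$, $p=q=\frac{2d}{d+2}$), since $u,w\in L^1\cap L^\infty$. Pointwise, $\nabla\varphi_R(x)-\nabla\varphi_R(y)\to 2(x-y)$, so the integrand tends to $2u(x)w(y)|x-y|^{2-d}$. Dominated convergence then gives the limit $-2c_d(d-2)H[u,w]$.

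For the diffusion terms, $u^{\frac{2d}{d+2}}\in L^1$ because $u\in L^1\cap L^\infty$, and $\Delta\varphi_R$ is bounded and tends to $2d$. Hence these terms converge to $2(d-2)\alpha_1\|u\|_{\frac{2d}{d+2}}^{\frac{4}{d+2}}\int u^{\frac{2d}{d+2}}$, and likewise for $w$. On the left side, monotone convergence sends $\int\varphi_R(u+w)\to I(t)$.

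To pass to the limit in the time-integrated identity, we need $I(t)<\infty$ on compact subintervals. We get this from the same identity: the right-hand side is bounded uniformly in $R$ on $[0,T]$ for $T<T_{max}$, using the local $L^\infty$ bounds and \eqref{2.3}, and $I(0)<\infty$ by \eqref{1.3}--\eqref{1.4}. This yields $I(t)-I(s)=\int_s^tG$. The regularity of $G$ is then routine. Continuity of $u,w$ in $L^1$, uniform $L^\infty$ bounds on compact time intervals, and interpolation give continuity in $L^{\frac{2d}{d+2}}$, and HLS gives continuity of $H$. So $G$ is continuous, $I\in C^1$, and $I'=G$ pointwise.

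The main obstacle is the interaction term. One must avoid the naive estimate $|x|\,|\nabla v|$, which is not obviously integrable against $u$. The fix is to perform the symmetrization before letting $R\to\infty$, so that only the bounded Hessian of the cutoff enters the estimate.
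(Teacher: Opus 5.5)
Your proposal is correct and follows the same route as the paper. Both test with $|x|^2$, integrate the diffusion term by parts to pick up $2d$ times the coefficient in \eqref{1.1}, and add the two cross-interaction terms after relabeling $x\leftrightarrow y$ so that the kernel collapses to $|x-y|^{2-d}$. Your version is more careful than the paper's formal computation in two ways: symmetrizing against the truncated weight $\varphi_R$ before letting $R\to\infty$ avoids the individually divergent integrals with kernel $|x|^2|x-y|^{-d}$ in the paper's intermediate splitting, and reading the diffusion coefficient from \eqref{1.1} rather than from the $\frac{\alpha_1}{2}$ in \eqref{1.14} is what actually produces the stated $C(\alpha_1,u)$.
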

Notice that with the definitions,
\begin{equation}\label{bu1}
G(t) = 2(d-2)E_\alpha[u(t),v(t)]\ .
\end{equation}

\begin{proof}
Straightforward calculations lead to
\begin{equation*}
\begin{split}
&\frac{d}{dt} I(t) = \int_{\R^d} |x|^2 \left(  u(x,t) + w(x,t)  \right) \, dx\\
&= \int_{\R^d} |x|^2 \Big[C(\alpha_1, u)\Delta \left(u^{\frac{2d}{d+2}}\right)- \nabla \cdot \left(u \nabla v\right) \\
&\qquad  \qquad \qquad + C(\alpha_2, w)\Delta \left(w^{\frac{2d}{d+2}}\right)- \nabla \cdot \left(w \nabla z\right) \Big]\, dx \\
&= 2dC(\alpha_1, u) \int_{\R^d} u(x,t)^{\frac{2d}{d+2}} \, dx + C(\alpha_2, w)\int_{\R^d} w(x,t)^{\frac{2d}{d+2}} \, dx \\
&\quad + 2 \iint_{\R^d \times \R^d} \left[   x \cdot \nabla \mathcal{K}(x-y)  \right]  u(x,t)w(y,t) \, dy \, dx \\
&\qquad + 2 \iint_{\R^d \times \R^d} \left[   x \cdot \nabla \mathcal{K}(x-y)  \right]  u(y,t)w(x,t) \, dy \, dx,
\end{split}
\end{equation*}
with $\mathcal{K}(x) = \frac{c_d}{|x|^{d-2}}$ and $c_d$ is the surface area of the sphere $S^{d-1}$ in $\R^d$.
Now,
\begin{equation*}
\begin{split}
&2\iint_{\R^d \times \R^d} \left[x \cdot  \nabla \mathcal{K}(x-y) \right] u(x,t) w(y,t) \,dy \, dx \\
& = -2c_d(d-2)\iint_{\R^d \times \R^d} \frac{(x-y) \cdot x}{|x-y|^d} u(x,t) w(y,t) \,dy \, dx \\
& = -2c_d(d-2)\iint_{\R^d \times \R^d} \frac{|x|^2}{|x-y|^d} u(x,t) w(y,t) \,dy \, dx \\
& \quad + 2c_d(d-2)\iint_{\R^d \times \R^d} \frac{y \cdot x}{|x-y|^d} u(x,t) w(y,t) \,dy \, dx \\
&= -c_d(d-2)\iint_{\R^d \times \R^d} \frac{|x|^2}{|x-y|^d} u(x,t) w(y,t) \,dy \, dx \\
& \quad - c_d(d-2)\iint_{\R^d \times \R^d} \frac{|y|^2}{|x-y|^d} u(x,t) w(y,t) \,dy \, dx \\
& \qquad + 2c_d(d-2)\iint_{\R^d \times \R^d} \frac{y \cdot x}{|x-y|^d} u(x,t) w(y,t) \,dy \, dx.
\end{split}
\end{equation*}
Similarly,
\begin{equation*}
\begin{split}
&2\iint_{\R^d \times \R^d} \left[x \cdot  \nabla \mathcal{K}(x-y) \right] u(y,t) w(x,t) \,dy \, dx \\
&= -c_d(d-2)\iint_{\R^d \times \R^d} \frac{|x|^2}{|x-y|^d} u(y,t) w(x,t) \,dy \, dx \\
& \quad - c_d(d-2)\iint_{\R^d \times \R^d} \frac{|y|^2}{|x-y|^d} u(x,t) w(y,t) \,dy \, dx \\
& \qquad + 2c_d(d-2)\iint_{\R^d \times \R^d} \frac{y \cdot x}{|x-y|^d} u(x,t) w(y,t) \,dy \, dx.
\end{split}
\end{equation*}
Combining, we deduce that
\begin{equation*}
\begin{split}
&\frac{d}{dt}\int_{\R^d} |x|^2 \left( u(x,t)+ w(y,t) \right) \, dx = 2dC(\alpha_1, u) \int_{\R^d} u^{\frac{2d}{d+2}} \, dx \\
&\qquad \qquad +  2dC(\alpha_2, w) \int_{\R^d} w^{\frac{2d}{d+2}} \, dx \\
&-c_d(d-2)\iint_{\R^d \times \R^d} \frac{|x|^2+|y|^2}{|x-y|^d} u(x,t)w(y,t) \, dy \, dx \\
&-c_d(d-2)\iint_{\R^d \times \R^d} \frac{|x|^2+|y|^2}{|x-y|^d} u(y,t)w(x,t) \, dy \, dx \\
&+ 4c_d(d-2)\iint_{\R^d \times \R^d} \frac{y \cdot x}{|x-y|^d} u(x,t) w(y,t) \,dy \, dx\\
& =2dC(\alpha_1, u) \int_{\R^d} u^{\frac{2d}{d+2}} \, dx +  2dC(\alpha_2, w) \int_{\R^d} w^{\frac{2d}{d+2}} \, dx \\
&\qquad -2c_d(d-2)\iint_{\R^d \times \R^d} \frac{u(x,t) w(y,t) }{|x-y|^{d-2}} \, dy \, dx.
\end{split}
\end{equation*}

\end{proof}

We now observe that
\begin{equation}\label{q1}
\begin{split}
\frac{d}{dt}I(t) = G(t)  &= 2(d-2)E_{\alpha}[u(t), w(t)]\\
	               &\leq   2(d-2)E_{\alpha}[u_0, w_0] = G(0).		
\end{split}
\end{equation}
As we have seen above, $\alpha_1$ and $\alpha_2$ are such that  \eqref{1.12} is violated,  then we may take the initial data
 $(u_0, w_0)$ to consist of appropriate multiples of $HLS$ optimizers, and then we shall have $E_\alpha[u_0,w_0] < 0$, and hence $G(0) < 0$.   By the monotonicity if the energy, we will then have
 $G(t) \leq G(0) < 0$ for all $t$ such that a regular solution exists. But then \eqref{q1} will imply that the second moment will be negative after some time which is impossible  by the non-negativity of $u$ and $w$.  Hence blow-up occurs in a finite time.

\begin{theorem}\label{thm3.2}
Let $\alpha_1, \alpha_2$ be such that \eqref{1.12} is violated. Then one can find initial data $(u_0, w_0)$ satisfying \eqref{1.3}--\eqref{1.4} such that the corresponding free energy solution blows up in finite time.
\end{theorem}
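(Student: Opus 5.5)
The plan is the classical virial argument. It has two parts: construct admissible initial data with negative free energy, then combine the second-moment identity of Lemma \ref{lem:2ndmonetevolution} with the energy inequality \eqref{1.16}.

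\textbf{Step 1: initial data.} Since \eqref{1.12} is violated, $\delta = C_{HLS} - 2\sqrt{\alpha_1\alpha_2}/c_d > 0$. By Lieb \cite{lieb1983}, the equality case of \eqref{2.3} with $\lambda = 2$, $p = q = \frac{2d}{d+2}$ is attained by $h(x) = (1+|x|^2)^{-(d+2)/2}$. We take $u_0 = a h$ and $w_0 = b h$ with $\sqrt{\alpha_1}\, a = \sqrt{\alpha_2}\, b$. Then the perfect square in \eqref{1.11} vanishes, and
\[
E_\alpha[u_0, w_0] = -c_d\,\delta\, ab\, \|h\|_{\frac{2d}{d+2}}^2 < 0.
\]
The Lieb optimizer has no finite second moment, since $|x|^2 h \sim |x|^{-d}$, so \eqref{1.3}--\eqref{1.4} fail for it. I would therefore truncate and mollify it: $h_R = (h \mathbf{1}_{B_R}) * \rho$. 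The functional $E_\alpha$ is continuous in $L^{\frac{2d}{d+2}}$, because the HLS inequality controls $H$ and the norm terms are continuous. Hence $E_\alpha[a h_R, b h_R] \to E_\alpha[a h, b h] < 0$, and for $R$ large the data are smooth, bounded, compactly supported, and of strictly negative energy. The $H^1$ condition on $h_R^{\frac{2d}{d+2}}$ holds by smoothness and compact support. The mass scale is free by the dilation invariance $E_\alpha[u_\lambda, w_\lambda] = \lambda^{d-2} E_\alpha[u, w]$, which lets us make the data ``large'' as in Theorem \ref{thm1.1}(ii).

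\textbf{Step 2: virial inequality.} Let $(u,w)$ be the free energy solution on $[0,T_{\max})$ given by Theorem \ref{thm2.5}, and suppose $T_{\max} = \infty$. By Lemma \ref{lem:2ndmonetevolution} and \eqref{bu1},
\[
I'(t) = G(t) = 2(d-2) E_\alpha[u(t), w(t)] \le 2(d-2) E_\alpha[u_0, w_0] =: -\kappa < 0,
\]
using \eqref{1.16} with the nonnegative dissipation dropped. Integrating gives $I(t) \le I(0) - \kappa t$, which is negative for $t > I(0)/\kappa$. This contradicts $u, w \ge 0$. Hence $T_{\max} \le I(0)/\kappa < \infty$, and by Theorem \ref{thm2.5} the $L^\infty$ norms blow up there.

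\textbf{Main obstacle.} The delicate point is justifying the second-moment identity for weak free energy solutions. This means using $|x|^2$ as a test function in \eqref{1.14}, which is not compactly supported, and handling the singular kernel in the symmetrization step. I would test with $|x|^2 \psi(x/R)$ for a cutoff $\psi$ and use the bound on $I(t)$, which must be propagated first via a Gronwall-type estimate. The kernel term is handled by symmetrizing in $(x,y)$, so that $(x-y)\cdot\nabla\mathcal K(x-y) = -(d-2)\mathcal K(x-y)$ is integrable by HLS. The diffusion term is controlled by the $L^2H^1$ regularity in Definition \ref{def1.1}. Letting $R\to\infty$ by dominated convergence yields the integrated form $I(t) - I(0) = \int_0^t G(s)\,ds$, which is all Step 2 needs.
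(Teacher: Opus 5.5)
Your proposal is correct and follows the paper's proof. You build negative-energy data from multiples of the Lieb HLS optimizer with $\sqrt{\alpha_1}\,a=\sqrt{\alpha_2}\,b$, then get $I(t)\le I(0)+G(0)t$ from Lemma~\ref{lem:2ndmonetevolution} and the energy inequality \eqref{1.16}, which contradicts $I\ge 0$.

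Your truncation step repairs a point the paper skips: every optimizer $(1+|x|^2)^{-(d+2)/2}$ has infinite second moment and so violates \eqref{1.3}--\eqref{1.4}. Two small fixes:
\begin{itemize}
\item Let the mollifier width shrink with $R$, or just use a smooth cutoff since $h$ is already smooth, so that $h_R\to h$ in $L^{\frac{2d}{d+2}}$. With a fixed $\rho$, $h_R$ converges to $h*\rho$ instead.
\item The mass can be chosen freely because $E_\alpha$ is $2$-homogeneous, not because of the dilation, which preserves mass.
\end{itemize}
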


\begin{proof}
We select initial configurations consisting of appropriate scale multiples of HLS optimizers such that the initial free energy is strictly negative, which directly implies $G(0) = 2(d-2)E_\alpha[u_0, w_0] < 0$. By the global energy dissipation inequality \eqref{1.16}, the functional $G(t)$ is monotonically non-increasing across the full existence horizon, which means:
\begin{equation}
G(t) \le G(0) < 0, \quad \forall t \in [0, T_{\max}).
\end{equation}
Integrating the second moment identity from Lemma \ref{lem:2ndmonetevolution} directly yields:
\begin{equation}
I(t) = I(0) + \int_0^t G(s) \, ds \le I(0) + G(0)t.
\end{equation}
Since $G(0) < 0$, the right-hand side becomes strictly negative at any finite time $t > -\frac{I(0)}{G(0)}$. However, the second moment $I(t)$ is inherently non-negative because the population densities $u$ and $w$ remain non-negative. This contradiction guarantees that the maximum existence time $T_{\max}$ cannot extend indefinitely, verifying finite-time blow-up.

\end{proof}

\section*{Acknowledgments}
 
 The author thanks Eric A. Carlen for many valuable discussions. He is also grateful for his suggestions to improve the presentation of the paper.

\end{document}